\numberwithin{equation}{section}
\newtheorem{thm}{Theorem}[section]
\newtheorem{cor}[thm]{Corollary}
\newtheorem{lem}[thm]{Lemma}
\newtheorem{prop}[thm]{Proposition}
\theoremstyle{definition}
\theoremstyle{remark}
\newtheorem{rem}{Remark}[section]
\newtheorem{example}{Example}[section]
\newcommand{\swedge}{{\scriptstyle\wedge}}
\newcommand{\dual}{{\scriptscriptstyle\vee}}
\newcommand{\nwedge}{\mathchoice{{\textstyle\wedge}}%
    {{\wedge}}%
    {{\textstyle\wedge}}%
    {{\scriptstyle\wedge}}}
\newsavebox{\spacebox}
\newcommand{\aspace}{\usebox{\spacebox}}
\let\oldaspace\aspace
\renewcommand{\aspace}[1][0pt]{%
	\mathrel{\raisebox{#1}{$\oldaspace$}}%
}
\journal{Linear Algebra and its Applications}
\begin{document}

\begin{frontmatter}

\title{An explicit description in terms of Pl\"ucker coordinates of the Lagrangian-Grassmannian}
\author[uacm]{Jes\'us Carrillo--Pacheco\corref{cor1}}
\ead{jesus.carrillo@uacm.edu.mx}
\author[uacm]{Fausto Jarqu\'\i n--Z\'arate}
\ead{fausto.jarquin@uacm.edu.mx}
\author[uacm]{Maurilio Velasco--Fuentes}
\ead{maurilio.velasco.fuentes@uacm.edu.mx}
\author[uam]{Felipe Zald\'{\i}var}
\ead{fz@xanum.uam.mx}
\address[uacm]{Academia de Matem\'aticas, Universidad Aut\'onoma de la Ciudad de M\'exico, 09790 M\'exico, D.~F., 
M\'exico.\tnoteref{t1}}
\tnotetext[t1]{
J. Carrillo-Pacheco, F. Jarqu{\'\i}n-Z\'arate and
M. Velasco-Fuentes were supported by the Cryptography
Laboratory Project PI2013-29. Secretar\'{\i}a de Ciencia,
Tecnolog\'{\i}a e Innovaci\'on del Distrito Federal, M\'exico.}
\address[uam]{Departamento de Matem\'aticas, Universidad Aut\'onoma Metropolitana-I, 09340 M\'exico, D.~F., M\'exico.}
\cortext[cor1]{Corresponding author}

\begin{abstract}
For an arbitrary field of any characteristic
we give an explicit description, in terms of Pl\"ucker coordinates,  of the projective linear space
that cuts out the Lagrangian-Grassmannian variety $L(n,2n)$ of
maximal isotropic subspaces in a symplectic vector space  of dimension $2n$ in the
Grassmannian variety $G(n,2n)$.
\end{abstract}

\begin{keyword}
Exterior algebra\sep  Pl\"ucker embedding\sep  Grassmannian variety\sep symplectic vector space\sep Lagrangian--Grassmannian variety\sep 
linear section.

\MSC 11T71\sep 14G50 \sep 51A50
\end{keyword}

\end{frontmatter}

\section{Introduction}\label{sec:sec1}
Let $E$ be  a finite dimensional symplectic vector space over an
arbitrary field $F$, with symplectic form $\langle\; ,\;\rangle$.
Thus, $E$ has even dimension, say $2n$. Recall that a vector
subspace $W\subseteq E$ is {\it isotropic} if $\langle
x,y\rangle=0$ for all $x,y\in W$, and if $W$ is isotropic its
dimension is at most $n$. The {\it Lagrangian-Grassmannian}
$L(n,2n)$ is the projective variety given by the isotropic vector
subspaces  $W\subseteq E$ of maximal dimension $n$:
$$L(n,2n)=\{W\in G(n,2n): W\;\text{is isotropic}\},$$
where $G(n,2n)$ denotes the Grassmannian variety of vector subspaces of dimension $n$ of $E$.
Denote by $\nwedge^rE$ the $r$-th exterior power of $E$.
The {\it Pl\"ucker embedding} is the regular map
$G(n,2n)\rightarrow {\mathbb P}(\wedge^nE)$ given on each $W\in G(n,2n)$
by choosing first a basis $w_1,\ldots, w_n$ of $W$ and then mapping the vector subspace
$W\in G(n,2n)$ to the tensor $w_1\swedge\cdots\swedge w_n\in \nwedge^nE$.
Since choosing a different basis  of $W$ changes the tensor $w_1\swedge\cdots\swedge w_n$
by a nonzero scalar, this tensor is a well-defined element in the projective space
${\mathbb P}(\nwedge^nE)\simeq{\mathbb P}^{N-1}$, where $N=\binom{2n}{n}=\dim_F(\nwedge^nE)$.
Under the Pl\"ucker embedding, the Lagrangian-Grassmannian is given by
$$L(n,2n)=\{w_1\swedge\cdots\swedge w_n\in G(n,2n): \langle w_i,w_j\rangle=0\;\text{for all $1\leq i<j\leq n$}\}.$$

Now, by wedging with the symplectic form $\langle\; ,\;\rangle$
viewed as a $2$-tensor in $\nwedge^2E^{\dual}$, where $E^{\dual}$
is the dual vector space, we have the contraction map
$$f:\nwedge^nE\rightarrow \nwedge^{n-2}E$$
given by
$$f(w_1\swedge\cdots\swedge w_n)=\sum_{1\leq s<t\leq n}\langle w_s,w_t\rangle
w_1\swedge\cdots\swedge \widehat{w}_s\swedge\cdots\swedge \widehat{w}_t\swedge\cdots\swedge w_n,$$
where $\widehat{w}$ means that the corresponding term is omitted.
If ${\mathbb P}(\ker f)$ is the projectivization of $\ker f$, in \cite{1} it is proved that
$$L(n,2n)=G(n,2n)\cap{\mathbb P}(\ker f).$$

Now, choose a basis $\{e_1,\ldots,e_{2n}\}$ of the  symplectic space $E$ such that
$$
\langle e_i,e_j\rangle=\begin{cases}
1& \text{if $j=2n-i+1$}, \\
0 & \text{otherwise},
\end{cases}$$
and define the set $I(d,2n)=\{\alpha=(\alpha_1,\ldots,\alpha_d):
1\leq\alpha_1<\cdots<\alpha_d\leq 2n\}$. Then, for $\alpha=(\alpha_1,\ldots,\alpha_n)\in I(n,2n)$ write
\begin{align*}
e_{\alpha}&:=e_{\alpha_1}\swedge\cdots\swedge e_{\alpha_n},\\
e_{\alpha_{st}}&:=e_{\alpha_1}\swedge\cdots\swedge\widehat{e}_{\alpha_s}
\swedge\cdots\swedge\widehat{e}_{\alpha_t}\swedge\cdots   \swedge e_{\alpha_n},
\end{align*}
and
$$p_{i\alpha_{st}(2n-i+1)}:=p_{i\alpha_1 \cdots \widehat{\alpha}_s
\cdots \widehat{\alpha}_t \cdots \alpha_n(2n-i+1)},$$ for the
corresponding Pl\"ucker coordinate. Then, in \cite[Proposition
6]{1} the kernel of $f$ is characterized as follows: For $w\in
\nwedge^nE$ written in Pl\"ucker coordinates as
$$w=\sum_{\alpha\in I(n,2n)}p_{\alpha}e_{\alpha}$$
we have that
\begin{equation}\label{eq2.1}
w\in\ker f\iff \sum_{i=1}^np_{i\alpha_{st}(2n-i+1)}=0,\;\text{for all $\alpha_{st}\in I(n-2,2n)$}.
\end{equation}

In \cite[Section 3]{1} these linear forms were given the following description:
For $\alpha_{st}\in I(n-2,2n)$ define the linear polynomials
$$\Pi_{\alpha_{st}}:=\sum_{i=1}^nc_{i,\alpha_{st},2n-i+1}X_{i,\alpha_{st},2n-i+1},$$
with
$$c_{i,\alpha_{st},2n-i+1}=\begin{cases}
1  & \text{if $|\text{supp}\{i,\alpha_{st},2n-i+1\}|=n$}, \\
0 & \text{otherwise},
\end{cases}$$
where $\text{supp}(\beta)=\{\beta_1,\ldots,\beta_d\}$ for
$\beta=(\beta_1,\ldots,\beta_d)\in I(d,2n)$. With this notation,
the generators of $\ker f$ are the polynomials $\Pi_{\alpha_{st}}$,
for $\alpha_{rs}\in I(n-2,2n)$.  If $Q_{\gamma}$ denote the quadratic
Pl\"ucker polynomials that define the Grassmann variety $G(n,2n)$ in ${\mathbb P}^{N}$, in terms of the linear forms
$\Pi_{\alpha_{st}}$, by  \cite[Section 3]{1}
we have the following characterization of $L(n,2n)$,
as the common zeros of the quadratic polynomials $Q_{\gamma}$
and the linear polynomials $\Pi_{\alpha_{st}}$, that is
\begin{equation}\label{eq2.2}
L(n,2n)={\EuScript Z}\langle Q_{\gamma}; \Pi_{\alpha_{st}}: \alpha_{st}\in I(n-2,2n)\rangle.
\end{equation}

The main result of this paper, established in
Theorem~\ref{thmPeven} and Theorem~\ref{thmPodd}, is to obtain an
explicit description in terms of Pl\"ucker coordinates  of the
linear space ${\mathbb P}(\ker f)$ over an arbitrary field $F$ of
any characteristic. Explicitly, we give a characterization of the
homogeneous linear system of equations that define ${\mathbb
P}(\ker f)$ for any positive integer $n$. The analysis is divided
in two parts: for $n$ even and then for $n$ odd; the odd case will be deducted
from the ideas of the even case. This explicit result is useful when
one studies the linear code associated to the
Lagrangian-Grassmannian as in~\cite{1}.
The following formula in Pl\"ucker coordinates will be used throughout this paper and thereafter
we  call it a {\it  Pl\"ucker linear relation}:
 $$X_{1\aspace[1.0pt] 2n}+X_{2\aspace[1.0pt] (2n-1)}+\cdots+X_{n\aspace[1.0pt] (n+1)}=0,$$
 where the space symbols $\aspace[1.5pt]$ should be replaced by elements $\alpha_{st}\in I(n-2,2n)$ in such a
 way that we obtain homogeneous linear equations
$$\Pi_{\alpha_{st}}:=X_{1,\alpha_{st},  2n}+X_{2,\alpha_{st},  (2n-1)}+\cdots+X_{n,\alpha_{st},  (n+1)}=0$$
in $k$-variables, where the term $X_{i,\alpha_{st},  (2n-i+1)}$ does
not appear if $|\text{supp}\{i,\alpha_{st},  (2n-i+1)\}|<n$. When
this happens we say $\Pi_{\alpha_{st}}$ is a  $k$-plane, and we
obtain a  combinatorial characterization of $\Pi_{\alpha_{st}}$ in
Lemma \ref{lem2.3}.

\section{A combinatorial description of the kernel of the contraction map}\label{sec:sec2}

For the system of linear homogeneous equations
$\Pi_{\alpha_{st}}=0$, $\alpha_{st}\in I(n-2,2n)$, that define the
kernel of the contraction map $f:\nwedge^nE\rightarrow
\nwedge^{n-2}E$, we describe its associated matrix $B$, see
\cite[Section 3]{1}, in terms of a combinatorial construction
using a family of matrices ${\EuScript L}_k$. As usual, let $I_s$
denote the identity $s\times s$ matrix, and we will sometimes use
this notation when $s$ is a binomial coefficient, that we denote
by $C^a_b=\binom{a}{b}$.

\subsection{An iterative process to construct a family of matrices}
For any integer $s\geq 1$, write {\boldmath$s$} $=(1,\ldots,1)$ a row matrix with $s$
entries equal to $1$. For any integer $k\geq 1$ define the $(k+1)\times C^{k+1}_{2}$ matrix $A^0_k$ as
$$A_k^0 =\left( \begin{tabular}{ c c  c c c c c}
\cline{1-1}\multicolumn{1}{|c|}{\boldmath${k}$}  & & & & &    \\
\cline{2-1}\cline{1-1} \multicolumn{1}{|c}{}  &
\multicolumn{1}{|c|}{\boldmath${k-1}$} & & & & \\
\cline{2-2}\multicolumn{1}{|c}{} & \multicolumn{1}{|c|}{} &
\multicolumn{1}{c}{$\ddots$} & & & \\
\cline{4-2}\multicolumn{1}{|c}{} &\multicolumn{1}{|c|}{} & \multicolumn{1}{c|}{$\cdots$}&
\multicolumn{1}{c|}{\boldmath${3}$}  & & \\
\cline{5-2}\cline{4-4}\multicolumn{1}{|c}{} & \multicolumn{1}{|c|}{} &
\multicolumn{1}{c|}{$\cdots$}& \multicolumn{1}{c|}{} &
\multicolumn{1}{c|}{\boldmath${2}$}   & \\
\cline{6-2}\cline{5-5} \multicolumn{1}{|c}{} & \multicolumn{1}{|c|}{} &
\multicolumn{1}{c|}{$\cdots$}& \multicolumn{1}{c|}{} &
\multicolumn{1}{c|}{} & \multicolumn{1}{c|}{\boldmath${1}$}\\
\cline{6-6}\multicolumn{1}{|c}{$I_k$} & \multicolumn{1}{|c|}{$I_{k-1}$} &
\multicolumn{1}{c|}{$\cdots$}& \multicolumn{1}{c|}{$I_3$} &
\multicolumn{1}{c|}{$I_2$} & \multicolumn{1}{c|}{$I_1$}    \\
\hline
\end{tabular}\right),$$
where $I_k, I_{k-1},\ldots,  I_3, I_2,I_1$ are the corresponding identity matrices and the non-marked spaces on the upper right blocks are filled-in with zeroes. Now consider the following $\big(1+k+C^{k+1}_{2}\big)\times C^{k+1}_{2}$ matrix
given by placing an identity matrix of size $C^{k+1}_{2}$ at the bottom part of the above $A^0_k$ matrix, that is
the matrix
$$A_k^0(I_{C^{k+1}_2}) =\left( \begin{tabular}{ c c  c c c c c}
\cline{1-1}\multicolumn{1}{|c|}{\boldmath${k}$}  & & & & &    \\
\cline{2-1}\cline{1-1} \multicolumn{1}{|c}{}  &
\multicolumn{1}{|c|}{\boldmath${k-1}$} & & & & \\
\cline{2-2}\multicolumn{1}{|c}{} & \multicolumn{1}{|c|}{} &
\multicolumn{1}{c}{$\ddots$} & & & \\
\cline{4-2}\multicolumn{1}{|c}{} &\multicolumn{1}{|c|}{} & \multicolumn{1}{c|}{$\cdots$}&
\multicolumn{1}{c|}{\boldmath${3}$}  & & \\
\cline{5-2}\cline{4-4}\multicolumn{1}{|c}{} & \multicolumn{1}{|c|}{} &
\multicolumn{1}{c|}{$\cdots$}& \multicolumn{1}{c|}{} &
\multicolumn{1}{c|}{\boldmath${2}$}   & \\
\cline{6-2}\cline{5-5} \multicolumn{1}{|c}{} & \multicolumn{1}{|c|}{} &
\multicolumn{1}{c|}{$\cdots$}& \multicolumn{1}{c|}{} &
\multicolumn{1}{c|}{} & \multicolumn{1}{c|}{\boldmath${1}$}\\
\cline{6-6}\multicolumn{1}{|c}{$I_k$} & \multicolumn{1}{|c|}{$I_{k-1}$} &
\multicolumn{1}{c|}{$\cdots$}& \multicolumn{1}{c|}{$I_3$} &
\multicolumn{1}{c|}{$I_2$} & \multicolumn{1}{c|}{$I_1$}    \\
\hline
\multicolumn{1}{|c}{}& & & & & \multicolumn{1}{c|}{}\\
\multicolumn{1}{|c}{}& & & & & \multicolumn{1}{c|}{}\\
\multicolumn{1}{|c}{}& & \multicolumn{1}{c}{$I_{C^{k+1}_2}$}& & & \multicolumn{1}{c|}{}\\
\multicolumn{1}{|c}{}& & & & & \multicolumn{1}{c|}{}\\ \hline
\end{tabular}\right).$$
Next, define the block-stepped matrix $A^1_k$ by joining the
above matrices $A_k^0(I_{C^{k+1}_2})$ side-by-side and aligning the
bottoms of the corresponding identity matrices, that is
$$A^1_k=A_k^0(I_{C^{k+1}_2})\sqcup A_{k-1}^0(I_{C^{k}_2})\sqcup\cdots\sqcup
A_2^0(I_{C^{3}_2})\sqcup A_1^0(I_{C^{2}_2}),$$
where $\sqcup$ means
joining together side-by-side and aligning the bottoms of the
corresponding identity matrices and filling the non-marked spaces on
the upper right blocks with zeroes. Notice that this is a matrix of
size
$$\big(k+1+C_2^{k+1}\big)\times \big(C_2^{k+1}+C_2^{k}+\cdots+C_2^{3}+C_2^{2}\big)=
\big(k+1+C_2^{k+1}\big)\times C_3^{k+2}.$$
Then, define the matrix $A^1_k(I_{C_3^{k+2}})$
obtained by adding the identity matrix $I_{C_3^{k+2}}$ to the bottom of
$A^1_k$. Thus, $A^1_k(I_{C_3^{k+2}})$ is  matrix of size $\big(k+1+C_2^{k+1}+C_3^{k+2}\big)\times C_3^{k+2}$.

We iterate these constructions to obtain matrices
$A^2_k,A^3_k,\ldots, A^m_k,\ldots$ and $A^2_k(I_{C_4^{k+3}})$,
$A^3_k(I_{C_5^{k+4}})$, $\ldots,
 A^m_k(I_{C_{m+2}^{k+m+1}})$, etcetera. Explicitly, for any integer
 $\ell\geq 0$ the matrix $A_k^{\ell+1}$ is given by
$$\label{iteraMatriz}
 A_k^{\ell+1}=A_k^{\ell}\left( I_{C^{k+\ell+1}_{\ell+2}} \right)\sqcup
A_{(k-1)}^{\ell}\left( I_{C^{k+\ell}_{ \ell+2}} \right)\sqcup
A_{(k-2)}^{\ell}\left( I_{C^{k+\ell-1}_{\ell+2}} \right) \sqcup
 \cdots \sqcup A_2^{\ell}\left( I_{C^{\ell+3}_{\ell+2}} \right)
\sqcup A_1^{\ell}\left( I_{C^{\ell+2}_{\ell+2}} \right),
$$
where $\sqcup$ is as before.

\subsection{The matrices ${\EuScript L}_k$}

Assume  that $n\geq 4$ is an \emph{even} integer and consider all even
integers $m$ such that $4\leq m\leq n$. Write $k=(m+2)/2$, and let
${\EuScript L}_k$ be the matrix
$${\EuScript L}_k=A_k^{k-3},$$
where we observe that its number of rows is
$$1+C^k_1+C^{k+1}_{2}+\cdots+C^{2k-4}_{k-3}+C^{2k-3}_{k-2}=C^{2k-2}_{k-2}=C^m_{(m-2)/2},$$
and its number of columns is
$$C^{2k-3}_{k-2}+C^{2k-4}_{k-2}+\cdots+C^{k-1}_{k-2}+C^{k-2}_{k-2}=C^{2k-2}_{k-1}=C^m_{m/2}.$$
Moreover, the matrix ${\EuScript L}_k$ has $k=(m+2)/2$ ones in each row,
and has $k-1$ ones in each column.

\begin{example}
In Example 4 of \cite{1}, for the contraction map
$f:\nwedge^4E\rightarrow \nwedge^{2}E$ we obtained that its kernel
is given by the solutions to a couple of systems of equations, one of them consisting of twenty-four $2$-planes and the other one consisting of the following four $3$-planes
\begin{align*}
\Pi_{1,8}:\quad& X_{1278}+X_{1368}+X_{1458}=0 \\
\Pi_{2,7}:\quad&X_{1278}+X_{2367}+X_{2457}=0 \\
\Pi_{3,6}:\quad&X_{1368}+X_{2367}+X_{3456}=0 \\
\Pi_{4,5}:\quad&X_{1458}+X_{2457}+X_{3456}=0,
\end{align*}
whose associated matrix ${\EuScript L}_3$ is:
$$ {\EuScript L}_3 = A_3^0 = \begin{pmatrix} 1&1&1&0&0&0\\
                                       1& 0 &0&1&1&0 \\
                                       0 & 1& 0 & 1 & 0 & 1 \\
                                        0 & 0 & 1 & 0 & 1 & 1  \end{pmatrix}.$$
\end{example}

\begin{example}
Let $m=10$ and $k=6$. By definition ${\EuScript L}_6 = A_6^3$,
where:

$$ A_6^0= \begin{pmatrix}\begin{tabular}{|c c c c c c }
\cline{1-1}\multicolumn{1}{|c|}{\boldmath${6}$}  & & & & &    \\ 
\cline{2-1}\cline{1-1} \multicolumn{1}{|c}{}  &
\multicolumn{1}{|c|}{\boldmath${5}$} & & & & \\ 
\cline{3-2}\cline{2-2}\multicolumn{1}{|c}{} & \multicolumn{1}{|c|}{} &
\multicolumn{1}{c|}{\boldmath${4}$} & & & \\ 
\cline{3-3}\cline{4-2}\multicolumn{1}{|c}{} &\multicolumn{1}{|c|}{} & \multicolumn{1}{c|}{}&\multicolumn{1}{c|}{\boldmath${3}$}  & & \\
\cline{5-2}\cline{4-4} \multicolumn{1}{|c}{} & \multicolumn{1}{|c|}{} &
\multicolumn{1}{c|}{}& \multicolumn{1}{c|}{} &
\multicolumn{1}{c|}{\boldmath${2}$}   & \\
\cline{6-2}\cline{5-5} \multicolumn{1}{|c}{} & \multicolumn{1}{|c|}{} &
\multicolumn{1}{c|}{}& \multicolumn{1}{c|}{} & \multicolumn{1}{c|}{}
& \multicolumn{1}{c|}{\boldmath${1}$}
\\ \cline{6-6}
\multicolumn{1}{|c}{$I_6$} & \multicolumn{1}{|c|}{$I_5$} &
\multicolumn{1}{c|}{$I_4$}& \multicolumn{1}{c|}{$I_3$} &
\multicolumn{1}{c|}{$I_2$} & \multicolumn{1}{c|}{$I_1$}    \\ \hline
\end{tabular} \end{pmatrix}
$$

\begin{align*}
A_6^1&=A_6^0 \left(I_{C^7_2}
\right)\sqcup A_5^0 \left( I_{C^6_2} \right)\sqcup \cdots \sqcup
A_2^0\left( I_{C^3_2} \right)\sqcup A_1^0 \left( I_{C^2_2} \right)\\
A_6^2&=A_6^1\left( I_{C^8_3}
\right) \sqcup A_5^1 \left( I_{C^7_3} \right) \sqcup \cdots \sqcup
A_2^1\left( I_{C^4_3} \right) \sqcup A_1^1\left( I_{C^3_3} \right)\\
A_6^3&=A_6^2\left( I_{C^9_4}
\right) \sqcup A_5^2 \left( I_{C^8_4} \right)\sqcup \cdots \sqcup
A_2^2\left( I_{C^5_4} \right)\sqcup A_1^2 \left( I_{C^4_4}\right).
\end{align*}
\end{example}

\subsection{A combinatorial description of the matrix $B$ associated to the contraction map}

Let $m\in{\mathbb N}$ be an even integer. Define
\begin{align*}
P_i&=(i,2m-i+1),\quad\text{for $1\leq i\leq m$}\\
\Sigma_s&=\{P_1,\ldots,P_s\},\quad\text{for $1\leq s\leq m$}\\
\Sigma_{[s,m]}&=\{P_{s+1},\ldots,P_m\}.
\end{align*}
Throughout this paper, suppose $m\geq 8$ and denote by
$C_{(m-2)/2}(\Sigma_m)$ the family of subsets of $\Sigma_m$ with
$(m-2)/2$ elements, and similarly for $C_{(m-6)/2}(\Sigma_{m-2})$.
For $\alpha=(\alpha(1),\ldots,\alpha((m-6)/2))\in I((m-6)/2,m-2)$,
let
$$T_{(\alpha(1),\ldots,\alpha((m-6)/2))}:=(P_{\alpha(1)},
\ldots, P_{\alpha((m-6)/2)})\times
C_2\big(\Sigma_{[\alpha((m-6)/2),m]}\big).$$
We call these sets
{\it triangular arrays } or {\it triangles}.
 The family
$T_{(\alpha(1),\ldots,\alpha((m-6)/2))}$, varying
$(\alpha(1),\ldots,\alpha((m-6)/2))\in I((m-6)/2,m-2)$ has the
Bruhat order \cite{5.1}, that is,
$$T_{(\alpha(1),\ldots,\alpha((m-6)/2))}\leq T_{(\beta(1),\ldots,\beta((m-6)/2))}$$
if and only if
$$(\alpha(1),\ldots,\alpha((m-6)/2))\leq (\beta(1),\ldots,\beta((m-6)/2)).$$
\begin{lem}\label{lem2.1}
Let $m\geq8$ an even integer. Then
$$ C_{(m-2)/2}(\Sigma_m)=\bigcup_{\alpha\in I((m-6)/2,m-2)}T_{\alpha}.$$
\end{lem}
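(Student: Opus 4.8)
The plan is to establish the set equality by proving the two inclusions separately: the inclusion $\supseteq$ by a direct cardinality check, and the inclusion $\subseteq$ by exhibiting a canonical triangle $T_\alpha$ containing a given subset.

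For $\bigcup_{\alpha}T_\alpha\subseteq C_{(m-2)/2}(\Sigma_m)$, I would first unwind the definition. A typical element of $T_{(\alpha(1),\ldots,\alpha((m-6)/2))}$, with $\alpha\in I((m-6)/2,m-2)$, is the datum of the fixed tuple $(P_{\alpha(1)},\ldots,P_{\alpha((m-6)/2)})$ together with a two-element subset $\{P_j,P_k\}\in C_2\big(\Sigma_{[\alpha((m-6)/2),m]}\big)$; by the definition $\Sigma_{[s,m]}=\{P_{s+1},\ldots,P_m\}$ this forces $\alpha((m-6)/2)<j<k\leq m$. Identifying this datum with the subset $\{P_{\alpha(1)},\ldots,P_{\alpha((m-6)/2)},P_j,P_k\}\subseteq\Sigma_m$, the chain of strict inequalities $\alpha(1)<\cdots<\alpha((m-6)/2)<j<k$ shows that all $(m-6)/2+2=(m-2)/2$ indices are pairwise distinct, so the subset indeed lies in $C_{(m-2)/2}(\Sigma_m)$.

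For the converse $C_{(m-2)/2}(\Sigma_m)\subseteq\bigcup_{\alpha}T_\alpha$, I would take an arbitrary $S\in C_{(m-2)/2}(\Sigma_m)$ and write it in increasing order as $S=\{P_{i_1},\ldots,P_{i_{(m-2)/2}}\}$ with $i_1<\cdots<i_{(m-2)/2}\leq m$. The natural (and canonical) choice is to peel off the top two indices: set $\alpha:=(i_1,\ldots,i_{(m-6)/2})$ and take the pair $\{P_{i_{(m-4)/2}},P_{i_{(m-2)/2}}\}$. What then remains is to verify that this is admissible, namely that $\alpha\in I((m-6)/2,m-2)$ and that the chosen pair genuinely belongs to $C_2\big(\Sigma_{[\alpha((m-6)/2),m]}\big)$; once this is checked, $S\in T_\alpha$ and the two inclusions combine to give the claimed equality.

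The only genuine point of the argument — and the step I would treat most carefully — is the index bound that makes $\alpha$ a legal index tuple. Since $S$ has $(m-2)/2$ strictly increasing indices bounded above by $m$, the third-from-top index satisfies $i_{(m-6)/2}\leq m-2$, because the two strictly larger indices $i_{(m-4)/2}<i_{(m-2)/2}\leq m$ sit above it. This single inequality does both jobs at once: it places the last entry of $\alpha$ in $\{1,\ldots,m-2\}$, so that $\alpha\in I((m-6)/2,m-2)$, and it guarantees that $\Sigma_{[\alpha((m-6)/2),m]}=\{P_{i_{(m-6)/2}+1},\ldots,P_m\}$ has at least two elements and contains both $P_{i_{(m-4)/2}}$ and $P_{i_{(m-2)/2}}$ (as $i_{(m-4)/2}>i_{(m-6)/2}$). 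No disjointness of the $T_\alpha$ is needed for the statement, so the sorted decomposition above furnishes one explicit triangle and completes the proof; the bookkeeping of these index ranges, rather than any conceptual difficulty, is where the care lies.
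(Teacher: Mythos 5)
Your proof is correct and follows essentially the same route as the paper: the inclusion $\bigcup_\alpha T_\alpha\subseteq C_{(m-2)/2}(\Sigma_m)$ by unwinding the definition, and the converse by sorting the indices of a given subset, splitting off the first $(m-6)/2$ as $\alpha$ and the top two as the pair in $C_2\big(\Sigma_{[\alpha((m-6)/2),m]}\big)$, with the key bound $i_{(m-6)/2}\leq m-2$ guaranteeing $\alpha\in I((m-6)/2,m-2)$. You are in fact slightly more explicit than the paper on that index bound, which the paper asserts without comment.
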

\begin{proof}
By  construction, each $T_{\alpha}\subseteq C_{(m-2)/2}(\Sigma_m)$. Now, let
$$\{P_{\alpha(1)},\ldots,P_{\alpha((m-2)/2)}\}\in C_{(m-2)/2}(\Sigma_m).$$
Then, $(\alpha(1),\ldots,\alpha((m-2)/2))\in I((m-2)/2, m)$ and hence $(\alpha(1),\ldots,\alpha((m-6)/2))\in I((m-6)/2, m-2)$ and $(\alpha((m-4)/2),\alpha((m-2)/2))$ satisfy that
$$\alpha((m-6)/2)+1\leq \alpha((m-4)/2)<\alpha((m-2)/2)\leq m.$$
It follows that
\begin{align*}
\{P_{\alpha(1)},\ldots,P_{\alpha((m-2)/2)}\}& \in (P_{\alpha(1)},\ldots,P_{\alpha((m-6)/2)})\times C_2(\Sigma_{[\alpha((m-6)/2)+1,m]})\\
&\quad =T_{(\alpha(1),\ldots,\alpha((m-6)/2))}.
\end{align*}
\end{proof}

\begin{example}
For $m=6$ we just have one triangle
$$ T = \left\{ \begin{matrix} P_1P_2 & P_1P_3 & \cdots & P_1P_6 \\
                              \quad & P_2P_3 & \cdots & P_2P_6 \\
                               \quad & \quad & \ddots & \vdots \\

                                \quad & \quad & \quad & P_5P_6 \end{matrix}
                                \right\}.$$
Note that $C_2(\Sigma_6) = T$, as in Lemma \ref{lem2.1}.
\end{example}

\begin{example}
For $m=8$ we have six triangles, namely
$$ T_i = P_i \times C_2(\Sigma_{[i,8]}) \text { with } i=1,2,\ldots, 6,$$
where $P_i$ is as above. Note that $C_3(\Sigma_8) = \cup_{i=1}^6
T_i$, as in Lemma \ref{lem2.1}.
\end{example}

\smallskip
\subsection{Construction of an auxiliary matrix $M_m$}\label{Sec-Mm}
Now, let $\varphi:C_{(m-2)/2}(\Sigma_m)\rightarrow F^{C^{2m}_m}$ be the
function given by
$$(P_{\alpha(1)},\ldots,P_{\alpha((m-2)/2)})\mapsto \big( \varphi^{\beta(1),\ldots,\beta(m/2)}_{P_{\alpha(1)},\ldots, P_{\alpha((m-2)/2)},P_i}\big)_{\beta\in I(m/2,m),\ }$$
where $$P_i\in \Sigma_m-\{P_{\alpha(1)},\ldots,P_{\alpha((m-2)/2)}
\},\quad i=1,\ldots,m,$$ and
$$\varphi^{\beta(1),\ldots,\beta(m/2)}_{P_{\alpha(1)},\ldots, P_{\alpha((m-2)/2)},P_i}=\begin{cases}
1  & \text{if $\big(\alpha(1),\ldots,\alpha((m-2)/2),i\big)=\big(\beta(1),\ldots,\beta(m/2)\big)$}, \\
0& \text{otherwise},
\end{cases}$$
thus, $\varphi$ defines a row vector of weight $r=\frac{m+2}{2}$,
whose columns are labeled by $\beta$. Varying $\beta$, define the
corresponding $C^m_{(m-2)/2}\times C^m_{m/2}$ matrix as
\begin{align}\label{matrizM}
M_m=\begin{pmatrix}
 \varphi^{\beta(1),\ldots,\beta(m/2)}_{P_{\alpha(1)},\ldots, P_{\alpha((m-2)/2)},P_i}
\end{pmatrix}
\end{align}
for $\alpha\in I((m-2)/2,m)$, $\beta\in I(m/2,m)$, and
$i=1,\ldots,m$.

\begin{example} The matrix $M_4$.
In this case $m=4$, $\Sigma_4 = \{P_1,P_2,P_3,P_4\}$ and
$\varphi:C_1(\Sigma_4) \longrightarrow F^{C^4_2}$. It is easy to see that
\begin{align*}
\varphi(P_1) =
\underbracket{\;\;1\;\;}_{12},\underbracket{\;\;1\;\;}_{13},\underbracket{\;\;1\;\;}_{14},
\underbracket{\;\;0\;\;}_{23},\underbracket{\;\;0\;\;}_{24},
\underbracket{\;\;0\;\;}_{34}, \\
\varphi(P_2) =
\underbracket{\;\;1\;\;}_{12},\underbracket{\;\;0\;\;}_{13},\underbracket{\;\;0\;\;}_{14},
\underbracket{\;\;1\;\;}_{23},\underbracket{\;\;1\;\;}_{24},
\underbracket{\;\;0\;\;}_{34}, \\
\varphi(P_3) =
\underbracket{\;\;0\;\;}_{12},\underbracket{\;\;1\;\;}_{13},\underbracket{\;\;0\;\;}_{14},
\underbracket{\;\;1\;\;}_{23},\underbracket{\;\;0\;\;}_{24},
\underbracket{\;\;1\;\;}_{34}, \\
\varphi(P_4) =
\underbracket{\;\;0\;\;}_{12},\underbracket{\;\;0\;\;}_{13},\underbracket{\;\;1\;\;}_{14},
\underbracket{\;\;0\;\;}_{23},\underbracket{\;\;1\;\;}_{24},
\underbracket{\;\;1\;\;}_{34}
\end{align*}
Hence, up to permutations of rows, we obtain the matrix
$$ M_4 = \begin{pmatrix}
1 & 1 & 1 & 0 & 0 & 0 \\
1 & 0 & 0 & 1 & 1 & 0 \\
0 & 1 & 0 & 1 & 0 & 1 \\
0 & 0 & 1 & 0 & 1 & 1 \\
\end{pmatrix}.$$
\end{example}

\begin{example}\label{EjM6} The matrix $M_6$.
In this case, $m=6$, $\Sigma_6 = \{P_1,P_2,P_3,P_4,P_5,P_6\}$, and
$\varphi:C_2(\Sigma_6) \longrightarrow F^{C_3^6}$, where
$C_2(\Sigma_6)$ is as above. Now, we have {\small \begin{align*}
\varphi(P_1P_2) & = (
\underbracket{\;\;1\;\;}_{123},\underbracket{\;\;1\;\;}_{124},\underbracket{\;\;1\;\;}_{125},
\underbracket{\;\;1\;\;}_{126},\underbracket{\;\;0\;\;}_{134},
\underbracket{\;\;0\;\;}_{135},
\underbracket{\;\;0\;\;}_{136},\underbracket{\;\;0\;\;}_{145},
\underbracket{\;\;0\;\;}_{146},\underbracket{\;\;0\;\;}_{156},\underbracket{\;\;0\;\;}_{234},\underbracket{\;\;0\;\;}_{235},
\ldots, \underbracket{\;\;0\;\;}_{356},\underbracket{\;\;0\;\;}_{456}),\\
\varphi(P_1P_3)  & = (
\underbracket{\;\;1\;\;}_{123},\underbracket{\;\;0\;\;}_{124},\underbracket{\;\;0\;\;}_{125},
\underbracket{\;\;0\;\;}_{126},\underbracket{\;\;1\;\;}_{134},
\underbracket{\;\;1\;\;}_{135},\underbracket{\;\;1\;\;}_{136},\underbracket{\;\;0\;\;}_{145},
\underbracket{\;\;0\;\;}_{146},
\underbracket{\;\;0\;\;}_{156},\underbracket{\;\;0\;\;}_{234},\underbracket{\;\;0\;\;}_{235},
\ldots, \underbracket{\;\;0\;\;}_{356},\underbracket{\;\;0\;\;}_{456}),\\
\varphi(P_1P_4) & = (
\underbracket{\;\;0\;\;}_{123},\underbracket{\;\;1\;\;}_{124},\underbracket{\;\;0\;\;}_{125},
\underbracket{\;\;0\;\;}_{126},\underbracket{\;\;1\;\;}_{134},
\underbracket{\;\;0\;\;}_{135},\underbracket{\;\;0\;\;}_{136},\underbracket{\;\;1\;\;}_{145},
\underbracket{\;\;1\;\;}_{146},\underbracket{\;\;0\;\;}_{156},\underbracket{\;\;0\;\;}_{234},\underbracket{\;\;0\;\;}_{235},
\ldots,
\underbracket{\;\;0\;\;}_{356},\underbracket{\;\;0\;\;}_{456}),\end{align*}}where
the subscripts under horizontal brackets are Pl\"ucker coordinates
in $\nwedge^6 E$. Continuing with the process for all $P_iP_j, 1\leq
i < j \leq 6$, we find that the matrix $M_6$ is given, up to
permutations of rows, by
$$
\left( \begin{tabular}{cccccccccccccccccccc}
1 & 1 & 1 & 1 & 0 & 0 & 0 & 0 & 0 & 0 & 0 & 0 & 0 & 0 & 0 & 0 & 0
& 0 & 0 & 0 \cr \cline{1-4}\multicolumn{1}{|c} 1 & 0 & 0 & 0 &
\multicolumn{1}{|c}{$1$} & 1 & 1 & 0 & 0 & 0 & 0 & 0 & 0 & 0 & 0 &
0 & 0 & 0 & 0 & 0 \cr \cline{5-7}\multicolumn{1}{|c} 0 & 1 & 0 & 0
& \multicolumn{1}{|c}{$1$} & 0 &  0 & \multicolumn{1}{|c} 1 & 1 &
0 & 0 & 0 & 0 & 0 & 0 & 0 & 0 & 0 & 0 & 0 \cr
\cline{8-9}\multicolumn{1}{|c} 0 & 0 & 1 & 0 & \multicolumn{1}{|c}
0 & 1 & 0 & \multicolumn{1}{|c} 1 & 0 & \multicolumn{1}{|c} 1 & 0
& 0 & 0 & 0 & 0 & 0 & 0 & 0 & 0 & 0 \cr
\cline{10-10}\multicolumn{1}{|c} 0 & 0 & 0 & 1 &
\multicolumn{1}{|c} 0 & 0 &  1 & \multicolumn{1}{|c} 0 & 1 &
\multicolumn{1}{|c} 1 & \multicolumn{1}{|c} 0 & 0 & 0 & 0 & 0 & 0
& 0 & 0 & 0 & 0 \cr \cline{1-10}\multicolumn{1}{|c} 1 & 0 & 0 & 0
& 0 & 0 & 0 & 0 & 0 & 0 & \multicolumn{1}{|c} 1 & 1 & 1 & 0 & 0 &
0 & 0 & 0 & 0 & 0 \cr \cline{11-13}\multicolumn{1}{|c} 0 & 1 & 0 &
0 & 0 & 0 & 0 & 0 & 0 & 0 &\multicolumn{1}{|c} 1 & 0 & 0 &
\multicolumn{1}{|c} 1 & 1 & 0 & 0 & 0 & 0 & 0 \cr
\cline{14-15}\multicolumn{1}{|c} 0 & 0 & 1 & 0 & 0 & 0 & 0 & 0 & 0
& 0 & \multicolumn{1}{|c} 0 & 1 & 0 & \multicolumn{1}{|c} 1 & 0 &
\multicolumn{1}{|c} 1 & 0 & 0 & 0 & 0 \cr \cline{16-16}
\multicolumn{1}{|c} 0 & 0 & 0 & 1 & 0 & 0 & 0 & 0 & 0 & 0 &
\multicolumn{1}{|c} 0 & 0 & 1 & \multicolumn{1}{|c} 0 & 1 &
\multicolumn{1}{|c} 1 & \multicolumn{1}{|c} 0 & 0 & 0 & 0 \cr
\cline{11-16}\multicolumn{1}{|c} 0 & 0 & 0 & 0 & 1 & 0 & 0 & 0 & 0
& 0 & \multicolumn{1}{|c} 1 & 0 & 0 & 0& 0 & 0 &
\multicolumn{1}{|c} 1 & 1 & 0 & 0 \cr \cline{17-18}
\multicolumn{1}{|c} 0 & 0 & 0 & 0 &  0 & 1 & 0 & 0 & 0 & 0
&\multicolumn{1}{|c}  0 & 1 & 0 & 0& 0 & 0 & \multicolumn{1}{|c} 1
& 0 & \multicolumn{1}{|c} 1 & 0 \cr
\cline{19-19}\multicolumn{1}{|c} 0 & 0 & 0 & 0 & 0 & 0 & 1 & 0 & 0
& 0 & \multicolumn{1}{|c} 0 & 0 & 1 & 0& 0 & 0 &
\multicolumn{1}{|c} 0 & 1 & \multicolumn{1}{|c} 1 &
\multicolumn{1}{|c} 0 \cr \cline{17-19}\multicolumn{1}{|c} 0 & 0 &
0 & 0 & 0 & 0 & 0 & 1 & 0 & 0 & \multicolumn{1}{|c} 0 & 0 &  0 &
1& 0 & 0 & \multicolumn{1}{|c} 1 & 0 & 0 & \multicolumn{1}{|c} 1
\cr \cline{20-20}\multicolumn{1}{|c} 0 & 0 & 0 & 0 & 0 & 0 & 0 & 0
& 1 & 0 & \multicolumn{1}{|c} 0 & 0 & 0 & 0& 1 & 0 &
\multicolumn{1}{|c} 0 & 1 & 0 & \multicolumn{1}{|c|} 1 \cr
\cline{20-20}\multicolumn{1}{|c} 0 & 0 & 0 & 0 & 0 & 0 & 0 & 0 & 0
& 1 & \multicolumn{1}{|c} 0 & 0 & 0 & 0& 0 & 1 &
\multicolumn{1}{|c} 0 & 0 & 1 & \multicolumn{1}{|c|} 1 \cr
\cline{1-20}
\end{tabular} \right).$$
\end{example}
\medskip

\begin{lem}\label{lem2.2}
Let $m \geq 10 $ be an even integer and $k=\frac{m+2}{2}$. Then, 
 $$\varphi(T_{(1,2,\ldots,\frac{m-8}{2},\frac{m-6}{2})})= A_k^1.$$
\end{lem}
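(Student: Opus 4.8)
The plan is to recognize both sides as the same combinatorial object, namely the $2$-versus-$3$ \emph{inclusion matrix} $W$ of a set with $k+2=(m+6)/2$ elements: the $0/1$ matrix whose rows are indexed by the $2$-subsets, whose columns are indexed by the $3$-subsets, and whose $(S,U)$-entry equals $1$ precisely when $S\subseteq U$. Both $\varphi(T_{(1,\ldots,(m-6)/2)})$ and $A_k^1$ will be identified with $W$, and matching their row and column orderings will give the stated equality.

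First I would unpack $\varphi$ on the triangle $T_\alpha$ with $\alpha=(1,2,\ldots,(m-6)/2)$. Each member of $T_\alpha$ has the form $A=\{P_1,\ldots,P_{(m-6)/2}\}\cup\{P_a,P_b\}$ with $\{P_a,P_b\}\in C_2(\Sigma_{[(m-6)/2,m]})$, and by definition the row $\varphi(A)$ records, for each $\beta\in I(m/2,m)$, whether $\beta$ arises from $A$ by adjoining one further $P_i\notin A$. Since $A$ already contains $\Sigma_{(m-6)/2}$, the only available indices are those of $\Sigma_{[(m-6)/2,m]}\setminus\{P_a,P_b\}$; hence the nonzero columns of $\varphi(A)$ are exactly the $\beta=\Sigma_{(m-6)/2}\cup\{P_a,P_b,P_c\}$ with $\{P_a,P_b,P_c\}\in C_3(\Sigma_{[(m-6)/2,m]})$. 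As $\Sigma_{[(m-6)/2,m]}$ has $m-(m-6)/2=(m+6)/2=k+2$ elements, relabeling it as $\{1,\ldots,k+2\}$ and discarding the identically zero columns identifies $\varphi(T_\alpha)$ with $W$: the row of $A$, i.e.\ the pair $\{a,b\}$, carries a $1$ in the column $\{a,b,c\}$ exactly when $\{a,b\}\subset\{a,b,c\}$. This also matches the sizes $C^{k+2}_2\times C^{k+2}_3$ and the row weight $k$ already recorded in the text.

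Next I would read off $A_k^0$ from its block description: its $j$-th column block consists of the $k+1-j$ pairs $\{j,j'\}$, $j'>j$, in $\{1,\ldots,k+1\}$, with the all-ones row of length $k+1-j$ placed in row $j$ and the block $I_{k+1-j}$ occupying rows $j+1,\ldots,k+1$. Thus $A_k^0$ is the $1$-versus-$2$ inclusion matrix (the vertex-edge incidence matrix of $K_{k+1}$) of $\{1,\ldots,k+1\}$ in lexicographic order, as already witnessed by $A_3^0=M_4$. To handle $A_k^1$ I would use the recursion $A_k^1=A_k^0(I_{C^{k+1}_2})\sqcup A_{k-1}^1$, immediate from its defining formula, and argue by induction on $k$ that $A_k^1=W$. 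Stratify the $3$-subsets of $\{1,\ldots,k+2\}$ by whether they contain the least element $1$. A triple containing $1$ is $\{1\}\cup\{b,c\}$ with $\{b,c\}\subset\{2,\ldots,k+2\}$, and a pair $S$ lies in it iff $S=\{1,x\}$ with $x\in\{b,c\}$ (contributing the matrix $A_k^0$ on the rows $\{1,x\}$) or $S=\{b,c\}$ (contributing $I_{C^{k+1}_2}$ on the rows avoiding $1$); so this column block is exactly $A_k^0(I_{C^{k+1}_2})$. A triple avoiding $1$ is a $3$-subset of $\{2,\ldots,k+2\}$, and every pair through $1$ is disjoint from it, so the remaining block is the $2$-versus-$3$ inclusion matrix of $\{2,\ldots,k+2\}$, which is $A_{k-1}^1$ by induction; the base case $A_1^1=(1,1,1)^{\mathsf T}$ is the inclusion matrix of a $3$-set.

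The genuinely technical point, and the main obstacle, is the row bookkeeping forced by $\sqcup$. I must verify that ``aligning the bottoms of the identity matrices'' identifies the rows indexed by pairs avoiding $1$ inside $I_{C^{k+1}_2}$ with the corresponding rows of $A_{k-1}^1$, while the $k+1$ rows indexed by the pairs $\{1,x\}$ protrude above and are zero over the $A_{k-1}^1$ columns. Propagating this through the recursion shows that the rows of $A_k^1$ are precisely the pairs of $\{1,\ldots,k+2\}$ in lexicographic order and its columns the triples in lexicographic order, the same orderings produced by $\varphi$ on $T_\alpha$ under the relabeling above. Assembling the three identifications then yields $\varphi(T_{(1,\ldots,(m-6)/2)})=A_k^1$. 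I expect the alignment check to be the only delicate step; the remainder is a direct translation of the definitions of $\varphi$, $A_k^0$ and $\sqcup$.
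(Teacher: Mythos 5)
Your proposal is correct, but it takes a genuinely different route from the paper's own proof. The paper never names the inclusion-matrix structure: it decomposes the triangle $T_{\alpha_0}$ into horizontal strips $R_{(m-4)/2},R_{(m-2)/2},\ldots,R_{m-1}$ (pairs grouped by their smallest entry), computes $\varphi$ on each cumulative union of strips --- $\varphi\big(P_{(\alpha_0\times R_{(m-4)/2})}\big)=A_k^0$, then $A_k^0(I^k)\sqcup A_{k-1}^0$, and so on --- and in a single pass, with no induction, matches the final accumulation against the defining expansion $A_k^1=A_k^0(I_{C^{k+1}_2})\sqcup A_{k-1}^0(I_{C^{k}_2})\sqcup\cdots\sqcup A_1^0(I_{C^{2}_2})$. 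You instead identify all three objects in play --- $\varphi(T_{\alpha_0})$, $A_k^0$, $A_k^1$ --- as set-inclusion incidence matrices ($2$-versus-$3$, $1$-versus-$2$, and $2$-versus-$3$, respectively), and prove $A_k^1=W$ by induction on $k$, stratifying the \emph{columns} (triples) by whether they contain the least element and using the recursion $A_k^1=A_k^0(I_{C^{k+1}_2})\sqcup A_{k-1}^1$; so where the paper peels off row strips, you peel off column blocks and let induction absorb the rest. What your approach buys: conceptual clarity and reusability --- the same identification shows more generally that $A_k^\ell$ is the $(\ell+1)$-versus-$(\ell+2)$ inclusion matrix of a $(k+\ell+1)$-set, from which Lemma~\ref{2.3} (${\EuScript L}_k=M_m$) follows almost for free, since $M_m$ is by construction the $\frac{m-2}{2}$-versus-$\frac{m}{2}$ inclusion matrix of an $m$-set. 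What the paper's computation buys: it stays entirely on the explicitly constructed matrices, doubling as a worked illustration of the $\sqcup$-construction, and needs no base case. Your deferred ``alignment check'' is indeed the only delicate point --- note that the height of $A_{k-1}^1$ is $k+C_2^{k}=C_2^{k+1}$, exactly the height of the identity block, so bottom-alignment does identify its rows with the pairs avoiding the least element --- and it is precisely where the paper is equally informal: the paper fixes no row or column orderings (its examples are stated ``up to permutations of rows''), and both proofs tacitly discard the identically zero columns of $\varphi(T_{\alpha_0})$ so that the column counts $C^m_{m/2}$ and $C^{k+2}_3$ can be reconciled.
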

\begin{proof}
Take $\displaystyle\alpha_0=(1,2,\ldots, (m-8)/2,(m-6)/2 )$ and
consider $$T_{\alpha_{0}}=P_{\alpha_{0}}\times
C_2(\Sigma_{[\frac{m-6}{2},m]})=(P_{({\alpha_0}\times
R_{\frac{m-4}{2}})}) \cup (P_{({\alpha_{0}}\times
R_{\frac{m-2}{2}})}) \cup \cdots \cup (P_{({\alpha_{0}}\times
R_{m-1})}),
$$
where $C_2(\Sigma_{[\frac{m-6}{2},m]})= R_{\frac{m-4}{2}} \cup R_{\frac{m-2}{2}} \cup\cdots \cup
R_{m-1}$, with
\begin{align*}
R_{\frac{m-4}{2}}&=\big\{ \textstyle  \big(\frac{m-4}{2},\frac{m-4}{2}+i\big): 1\leq i\leq \frac{m+4}{2}\big\},\\
R_{\frac{m-2}{2}}&= \big\{ \textstyle  \big(\frac{m-2}{2},\frac{m-2}{2}+i\big): 1\leq i\leq \frac{m+2}{2}\big\},\\
&\; \; \vdots \\
R_{m-1}&=\big\{ \textstyle  \big(m-1,m\big) \big\},
\end{align*}$P_{\alpha_0}= (P_1,P_2,\cdots, P_{\frac{m-8}{2}},
P_{\frac{m-6}{2}})$, and an analogous construction for $P_{(\alpha_0
\times R_{\frac{m-j}{2}})}$. If we now consider the corresponding
$P_{(\alpha_0\times (\frac{m-4}{2},\frac{m-4}{2}+i))}$, for
$R_{\frac{m-4}{2}}$, then the  first row of the matrix
$\varphi(T_{\alpha_0})$ is
$$\varphi(P_{(\alpha_0\times (\frac{m-4}{2},\frac{m-4}{2}+1))})=\varphi(P_{(\alpha_0\times
(\frac{m-4}{2},\frac{m-2}{2}))})=(\overbrace{1,\ldots,1}^k,0,\ldots,0),$$
the first two rows of the matrix $\varphi(T_{\alpha_0})$ are
$$
\begin{pmatrix}
\varphi(P_{(\alpha_0\times (\frac{m-4}{2},\frac{m-4}{2}+1))})  \\
 \varphi(P_{(\alpha_0\times (\frac{m-4}{2},\frac{m-4}{2}+2))})
\end{pmatrix}=\begin{pmatrix}
\overbrace{1,1,\ldots,1}^k,& 0,\ldots,0,& 0,\ldots,0 \\
1,0,\ldots,0,&\underbrace{1,\ldots,1}_{k-1},&0,\ldots,0
\end{pmatrix},
$$
the first three rows of the matrix $\varphi(T_{\alpha_0})$  are
$$
\begin{pmatrix}
\varphi(P_{(\alpha_0\times (\frac{m-4}{2},\frac{m-4}{2}+1))})  \\
 \varphi(P_{(\alpha_0\times (\frac{m-4}{2},\frac{m-4}{2}+2))})\\
  \varphi(P_{(\alpha_0\times (\frac{m-4}{2},\frac{m-4}{2}+3))})
\end{pmatrix}=\begin{pmatrix}
\overbrace{1,1,\ldots,1}^k,& 0,\ldots,0,& 0,\ldots,0, & 0,\ldots,0 \\
1,0,\ldots,0,&\underbrace{1,\ldots,1}_{k-1},&0,\ldots,0, & 0,\ldots,0\\
0,1,\ldots,0,&1,0,\ldots,0&\underbrace{1,\ldots,1}_{k-2},& 0,\ldots,0
\end{pmatrix},
$$
etcetera. It follows that
$$\varphi\big(P_{(\alpha_0\times R_{\frac{m-4}{2}})}\big)=A_k^0.$$
Now, let $I^k$ be the matrix given by the first $k$ rows of the
identity matrix $I_{C_2^{k+1}}$. Then, by an analogous argument
{\small \begin{align*} \varphi\big(P_{(\alpha_0\times
R_{\frac{m-4}{2}})}\cup P_{(\alpha_0\times
 R_{\frac{m-2}{2}})}\big)& =A_k^0(I^k)\sqcup A_{k-1}^0,\\
\varphi\big(P_{(\alpha_0\times R_{\frac{m-4}{2}})}\cup
P_{(\alpha_0\times R_{\frac{m-2}{2}})}\cup P_{(\alpha_0\times
R_{\frac{m}{2}})}\big)&=A_k^0(I^{k+(k-1)})
\sqcup A_{k-1}^0(I^{k-1})\sqcup A_{k-2}^0,\\
&\;\;\vdots\\
\varphi\big(P_{(\alpha_0\times R_{\frac{m-4}{2}})}
\cup \cdots \cup
P_{(\alpha_0\times R_{m-2})}\big)&=A_k^0(I^{k+(k-1)+\cdots+2})\sqcup
A_{k-1}^0(I^{(k-1)+\cdots+2})\\
&\qquad\qquad\qquad\qquad\qquad\sqcup\cdots\sqcup A_{2}^0(I^2)\sqcup A_1^0\\
\varphi\big(P_{(\alpha_0\times R_{\frac{m-4}{2}})}\cup \cdots \cup
P_{(\alpha_0\times R_{m-1})}\big)&=A_k^0(I^{k+(k-1)+\cdots+1})\sqcup
A_{k-1}^0(I^{(k-1)+\cdots+1})\\
 &\qquad\qquad\qquad\qquad    \sqcup\cdots\sqcup A_{2}^0(I^3)\sqcup
A_1^0(I^1)\\
&=A_k^1,
\end{align*}}where for the last equality we just notice that
$I^{k+(k-1)+\cdots+1}= I_{C_2^{k+1}}$ and similarly for the other
$I^t$ in that formula. Therefore
$$\varphi(T_{\alpha_0}) = \varphi(T_{(1,2,\ldots,(m-8)/2,(m-6)/2)})=\varphi\big(P_{(\alpha_0\times
R_{\frac{m-4}{2}})}\cup \cdots \cup P_{(\alpha_0\times
R_{m-1})}\big) =A_k^1.$$
\end{proof}

Recall now that
$$T_{(1,2,\ldots,\frac{m-8}{2},\frac{m-6}{2})}:=(P_1,\ldots,P_{(m-8)/2},
P_{(m-6)/2})\times C_2(\Sigma_{[(m-6)/2,m]}).$$
Then, define the triangular arrays:

$$\begin{array}{ll}{\begin{array}{l}
T_{1}^1 =T_{(1,2,\ldots,\frac{m-8}{2},\frac{m-6}{2})}\\
\displaystyle T_{2}^1 =\bigcup_{i=0}^{\frac{m+2}{2}}T_{(1, 2,\ldots,
\frac{m-8}{2}, \frac{m-6}{2}+i)}\\
\displaystyle T_{3}^1 =T_{2}^1\cup\bigcup_{i=0}^{\frac{m}{2}}T_{(1, 3,\ldots,
\frac{m-6}{2}, \frac{m-4}{2}+i)}\\
\displaystyle \displaystyle T_{4}^{1} =T_{3}^1\cup \bigcup_{i=0}^{\frac{m-2}{2}}T_{(1,
4,\ldots,\frac{m-4}{2},\frac{m-2}{2}+i)}\\
\;\;\,\quad \vdots  \\
\displaystyle T_{\frac{m+4}{2}}^{1} =T_{\frac{m+2}{2}}^1\cup\bigcup_{i=0}^{1}T_{(1, \frac{m+4}{2},\ldots,m-4,m-3+i)}\\
\displaystyle  T_{\frac{m+6}{2}}^{1} =T_{\frac{m+4}{2}}^1\cup T_{(1,
\frac{m+6}{2},\ldots,m-3,m-2)}
\end{array}} &
{\begin{array}{l}
T_{2}^2 =T_{(2,3,\ldots,\frac{m-6}{2},\frac{m-4}{2})}\\
\displaystyle  T_{3}^2 =\bigcup_{i=0}^{\frac{m}{2}}T_{(2,3,\ldots, \frac{m-6}{2},
    \frac{m-4}{2}+i)}\\
\displaystyle  T_{4}^2 =T_{3}^2\cup\bigcup_{i=0}^{\frac{m-2}{2}}T_{(2,4,\ldots,
\frac{m-4}{2}, \frac{m-2}{2}+i)}\\
\displaystyle T_{5}^{2} =T_{4}^2\cup
\bigcup_{i=0}^{\frac{m-4}{2}}T_{(2,5,\ldots,\frac{m-2}{2},
    \frac{m}{2}+i)}\\
\;\;\,\quad \vdots \\
\!\!\! \!\!\! \displaystyle T_{\frac{m+4}{2}}^{2} =T_{\frac{m+2}{2}}^2\cup\bigcup_{i=0}^{1}T_{(2, \frac{m+4}{2},\ldots,m-3+i)}\\
\!\!\! \!\!\! \displaystyle   T_{\frac{m+6}{2}}^{2} =T_{\frac{m+4}{2}}^2\cup T_{(2,
\frac{m+6}{2},\ldots,m-2)}
\end{array}} \\
 & \\\hline & \\
{\begin{array}{l}
T_{3}^3 = T_{(3,4,\ldots,\frac{m-4}{2},\frac{m-2}{2})}\\
\displaystyle  T_{4}^3 = \bigcup_{i=0}^{\frac{m-2}{2}}T_{(3,4,\ldots, \frac{m-4}{2},\frac{m-2}{2}+i)}\\
\displaystyle  T_{5}^3 = T_{4}^3\cup\bigcup_{i=0}^{\frac{m-4}{2}}T_{(3,5,\ldots,
\frac{m-2}{2},
    \frac{m}{2}+i)}\\
\displaystyle  T_{6}^{3} = T_{5}^3\cup
\bigcup_{i=0}^{\frac{m-6}{2}}T_{(3,6,\ldots,\frac{m}{2},
    \frac{m+2}{2}+i)}\\
\;\;\,\quad \vdots  \hspace{5.3cm} \cdots \\
\displaystyle T_{\frac{m+4}{2}}^{3} = T_{\frac{m+2}{2}}^3\cup\bigcup_{i=0}^{1}T_{(3,6,\ldots,m-3+i)}\\
T_{\frac{m+6}{2}}^{3} = T_{\frac{m+4}{2}}^3\cup T_{(3,
\frac{m+6}{2},\ldots,m-2)}
\end{array}}  &
{\begin{array}{l}
T_{\frac{m+2}{2}}^{\frac{m+2}{2}}=T_{(\frac{m+2}{2},\frac{m+4}{2},\ldots,m-4,m-2)}\\
\displaystyle  T_{\frac{m+4}{2}}^{\frac{m+2}{2}}=\bigcup_{i=0}^1T_{(\frac{m+2}{2},\frac{m+4}{2},\ldots,m-4,m-3+i)}\\
T_{\frac{m+6}{2}}^{\frac{m+2}{2}}=T_{\frac{m+4}{2}}^{\frac{m+2}{2}}\cup T_{(\frac{m+2}{2},\frac{m+6}{2},\ldots,m-4,m-2)}\\
\quad\qquad \vdots\\
T_{\frac{m+6}{2}}^{\frac{m+2}{2}}=T_{(\frac{m+4}{2},\frac{m+6}{2},\ldots,m-3,m-2)}.
\end{array}} \\
\\ \hline & \\
\end{array}  $$
Observe now that for these triangles we have that
$$T_{m}=T_{\frac{m+6}{2}}^{1}\cup T_{\frac{m+6}{2}}^{2}\cup
\cdots \cup T_{\frac{m+6}{2}}^{\frac{m+4}{2}}=C_{\frac{m-2}{2}}(\Sigma_m).$$

\begin{lem}\label{2.3}
Let $k=(m+2)/2$,  for an even integer $m$ such that $8\leq m\leq n$. Then, we have that ${\EuScript L}_k=M_m$.
\end{lem}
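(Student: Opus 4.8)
The plan is to recognize both matrices as the same set-inclusion matrix and then to match the two recursive constructions level by level. First I would unwind the definition of $\varphi$: the entry of $M_m$ in the row indexed by $\alpha=(P_{\alpha(1)},\ldots,P_{\alpha((m-2)/2)})$ and the column indexed by $\beta\in I(m/2,m)$ equals $1$ precisely when $\beta$ is obtained from $\alpha$ by adjoining one further index $P_i$. Thus, up to the ordering of its rows and columns, $M_m$ is the inclusion matrix between the $(m-2)/2$-subsets and the $m/2$-subsets of $\Sigma_m$, which already explains the stated row weight $k=(m+2)/2$ and column weight $k-1$. Lemma~\ref{lem2.1} lets me partition the rows of $M_m$ along the triangle decomposition $C_{(m-2)/2}(\Sigma_m)=\bigcup_\alpha T_\alpha$, and the nested families $T^{j}_{i}$ defined above group these triangles precisely so that $T_m=T^{1}_{(m+6)/2}\cup\cdots\cup T^{(m+4)/2}_{(m+6)/2}=C_{(m-2)/2}(\Sigma_m)$.

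Next I would argue by induction on the iteration level $\ell$, with Lemma~\ref{lem2.2} as the base case: it computes $\varphi$ on the single innermost triangle $T^{1}_{1}=T_{\alpha_0}$ by splitting $C_2(\Sigma_{[(m-6)/2,m]})$ into the rows $R_{(m-4)/2},\ldots,R_{m-1}$ and recognizing the resulting block-stepped pattern as $A_k^{0}(I)\sqcup A_{k-1}^{0}(I)\sqcup\cdots\sqcup A_1^{0}=A_k^{1}$. The inductive step is the identical computation performed one level higher: freeing the last still-fixed coordinate of the common prefix turns a union of level-$\ell$ triangular arrays into a union of sub-blocks whose $\varphi$-images are $A_k^{\ell}(I),A_{k-1}^{\ell}(I),\ldots,A_1^{\ell}$, and by the defining recursion
$$A_k^{\ell+1}=A_k^{\ell}\!\left(I_{C^{k+\ell+1}_{\ell+2}}\right)\sqcup A_{k-1}^{\ell}\!\left(I_{C^{k+\ell}_{\ell+2}}\right)\sqcup\cdots\sqcup A_1^{\ell}\!\left(I_{C^{\ell+2}_{\ell+2}}\right)$$
these assemble to $A_k^{\ell+1}$. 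Since the prefixes indexing the triangles have length $(m-6)/2=k-4$, starting from $\ell=1$ and integrating out one prefix coordinate per step, after $k-4$ steps the accumulated row set reaches the full $T_m=C_{(m-2)/2}(\Sigma_m)$ and the level reaches $\ell=k-3$, giving $\varphi\big(C_{(m-2)/2}(\Sigma_m)\big)=A_k^{k-3}={\EuScript L}_k$.

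The step I expect to be the main obstacle is the column bookkeeping inside the inductive assembly. Each triangle contributes a block supported only on the columns $\beta$ containing one of its rows, and when several triangles are merged their supports overlap exactly on those $\beta$ that contain a row from more than one triangle; I must verify that these shared columns are precisely the ones encoded by the appended identity blocks $A^{\ell}(I_{\cdot})$ together with the ``align the bottoms and fill the upper-right blocks with zeroes'' convention of $\sqcup$, so that no column is double-counted and none is omitted. Finally, because the equality holds only after fixing an order of rows (and, correspondingly, of columns), I would order the rows of $M_m$ by the Bruhat order on the triangles $T_\alpha$ and by the induced order inside each $R$-block, which is exactly the order produced by the construction of $A_k^{k-3}$, so that the identification becomes literal rather than merely up to a permutation. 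The small boundary case $m=8$ (where Lemma~\ref{lem2.2} does not apply) would be checked directly, in the spirit of the earlier examples.
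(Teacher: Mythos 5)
Your proposal takes essentially the same route as the paper's own proof: the paper likewise uses Lemma~\ref{lem2.2} as the base case ($\varphi(T_1^1)=A_k^1$) and then climbs the nested families $T_2^1,T_3^1,\ldots,T_{(m+6)/2}^1,\ldots$, assembling at each stage the blocks $A_k^{\ell}(I)\sqcup A_{k-1}^{\ell}(I)\sqcup\cdots\sqcup A_1^{\ell}(I)$ into $A_k^{\ell+1}$ via the defining recursion, until $\varphi(T_m)=A_k^{k-3}={\EuScript L}_k$, and concludes from $T_m=C_{(m-2)/2}(\Sigma_m)$ that $M_m={\EuScript L}_k$. Your added care about the column bookkeeping, the row/column ordering, and the boundary case $m=8$ (where the hypothesis $m\geq 10$ of Lemma~\ref{lem2.2} is not met) addresses precisely the details the paper leaves implicit under ``arguments already discussed.''
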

\begin{proof}
Recall that $C_{(m-2)/2}(\Sigma_m)=\bigcup_{\alpha\in
I((m-6)/2,m-2)}T_{\alpha}$ (see Lemma~$\ref{rem2.1}$) and the fact that
$T^1_1=T_{(1,2,\ldots,\frac{m-8}{2},\frac{m-6}{2})}$. Then, by
Lemma~$\ref{lem2.2}$ and arguments already discussed, we have
{\small
\begin{align*}
\varphi(T_{1}^1)&=A_k^0(I_{C_2^{k+1}})\sqcup A_{k-1}^0(I_{C_2^{k}})\sqcup \cdots\sqcup A_2^0(I_{C_2^{3}})\sqcup A_1^0(I_{C_2^{2}})= A_k^1,\\
\varphi(T_{2}^1)&=
A_k^1(I_{C_3^{k+2}})\sqcup A_{k-1}^1(I_{C_3^{k+1}})\sqcup \cdots\sqcup A_2^1(I_{C_3^{4}})\sqcup A_1^1(I_{C_3^{3}})= A_k^2,\\
\varphi(T_{3}^1)&=
A_k^2(I_{C_4^{k+3}})\sqcup A_{k-1}^2(I_{C_4^{k+2}})\sqcup \cdots\sqcup A_2^2(I_{C_4^{5}}) \sqcup A_1^2(I_{C_4^{4}})= A_k^3,\\
&\;\;\vdots\\
\varphi(T_{\frac{m+6}{2}}^{1})&=
A_k^{k-5}(I_{C_{\frac{m-4}{2}}^{m-2}})\sqcup A_{k-1}^{k-5}(I_{C_{\frac{m-4}{2}}^{m-1}})\sqcup \cdots\sqcup A_2^{k-5}(I_{C_{\frac{m-4}{2}}^{\frac{m-2}{2}}})\sqcup A_1^{k-5}(I_{C_{\frac{m-4}{2}}^{\frac{m-4}{2}}})= A_k^{k-4}.\\
\mathrm{And\;so}\\
\varphi\big(T_{m}^{}\big)&
=\varphi\big(T_{\frac{m+6}{2}}^{1}\cup T_{\frac{m+6}{2}}^{2}\cup \cdots \cup T_{\frac{m+6}{2}}^{\frac{m+4}{2}}\big),\\
&=A_k^{k-4}(I_{C_{\frac{m-2}{2}}^{m-1}})\sqcup A_{k-1}^{k-4}(I_{C_{\frac{m-2}{2}}^{m}})\sqcup \cdots\sqcup A_2^{k-4}(I_{C_{\frac{m-2}{2}}^{\frac{m}{2}}})\sqcup A_1^{k-4}(I_{C_{\frac{m-2}{2}}^{\frac{m-2}{2}}}),\\
&=A_k^{k-3}= {\EuScript L}_k.
\end{align*}}
 Now, since $C_{\frac{m-2}{2}}(\Sigma_m)=T_m$, then
 $$M_m=\Big(\varphi^{\beta(1),\ldots,\beta(\frac{m}{2})}_{P_{\alpha(1)},
 \ldots, P_{\alpha(\frac{m-2}{2})},P_{i}}\Big)_{\alpha\in I(\frac{m-2}{2},m),\,\beta\in
 I(\frac{m}{2},m)} = \varphi(C_{\frac{m-2}{2}}(\Sigma_m)) = {\EuScript
 L}_k,$$
 for
 $i=1,\ldots,m$.
Since $\{P_{\alpha(1)},\ldots,
P_{\alpha(\frac{m-2}{2})}:\;\alpha\in
I(\frac{m-2}{2},m)\}=C_{\frac{m-2}{2}}(\Sigma_m)$, then
$M_m={\EuScript L}_k$.
\end{proof}

\begin{rem}\label{rem2.1}
For $n$ even and $r=(n+2)/2$ consider integers $1\leq \ell\leq r-2$ and sequences of integers
$$1\leq a_1<a_2<\cdots<a_{2\ell}\leq 2n\quad\text{such that $a_i+a_j\neq 2n+1$},$$
and define
$$\Sigma_{a_1,\ldots,a_{2\ell}}:=\{P_i\in\Sigma_n: i+a_j\neq 2n+1,\, 2n-i+1+a_j\neq 2n+1\}.$$
Then:
\medskip

\noindent{(1)} We have $\big|\Sigma_{a_1,\ldots,a_{2\ell}}\big|=n-2\ell$.
\medskip

\noindent{(2)} For  $1\leq \ell \leq r-2$, letting
\begin{align*}
\Sigma\{a_1,\ldots,a_{2\ell}\}&:=(a_1,\ldots, a_{2\ell})\times C_{(n-2(\ell+1))/2}(\Sigma_{a_1,\ldots ,a_{2\ell}}) \\
&=\big\{\big(a_1\ldots a_{2\ell}, P_{\alpha(1)},\ldots, P_{\alpha((n-2(\ell+1))/2)}\big);\text{such that}\\
& \big(\alpha(1),\ldots, \alpha((n-2(\ell+1))/2)\big)\in I(
(n-2(\ell+1))/2,n-2\ell)\big\},
\end{align*}
then $\big|\Sigma\{a_1,\ldots,a_{2\ell}\}\big|=Q_{a_1\ldots
a_{2\ell}}C^{n-2\ell}_{(n-2(\ell+1))/2)}$, where
$$Q_{a_1\ldots a_{2\ell}}:=\big|\{(a_1,\ldots,a_{2\ell})\in I(2\ell,2n): a_i+a_j\neq 2n+1,\, 1\leq \ell\leq n-2\}\big|.$$

\noindent{(3)} For $\ell=0$, setting
$\Sigma\{\emptyset\}:=C_{(n-2)/2}(\Sigma_n)$, we have that
$\big|\Sigma\{\emptyset\}\big|=C^n_{(n-2)/2}$.
\end{rem}

\begin{lem}\label{lem2.3}
For any $(a_1,\ldots,a_{2k})\in I(2k,2n)$ there exists a bijection
between the set $\Sigma\{a_1,\ldots,a_{2k}\}$ and
the set of $(r-k)$-planes in ${\mathbb P}(\ker f)$
of the form $\Pi_{\alpha_{st}}$, for $\alpha_{st}\in I(n-2,2n)$.
\end{lem}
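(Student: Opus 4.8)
The plan is to reduce the statement to a transparent combinatorial decomposition of the support of $\alpha_{st}$ into complete symplectic pairs and leftover ``singletons.''

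First I would compute the plane number of an arbitrary $\Pi_{\alpha_{st}}$. By the definition of the coefficients $c_{i,\alpha_{st},2n-i+1}$, the term $X_{i,\alpha_{st},2n-i+1}$ survives precisely when the symplectic pair $\{i,2n-i+1\}$ is disjoint from $\text{supp}(\alpha_{st})$. The $n$ pairs $\{i,2n-i+1\}$, $1\le i\le n$, partition $\{1,\ldots,2n\}$, so I can classify each pair according to whether $\text{supp}(\alpha_{st})$ meets it in $0$, $1$, or $2$ elements. Writing $p$, $q$, $d$ for the respective counts, the two identities $p+q+d=n$ and $2p+q=|\text{supp}(\alpha_{st})|=n-2$ give $d=p+2$. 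Thus the plane number of $\Pi_{\alpha_{st}}$ equals $p+2$, where $p$ is the number of complete symplectic pairs contained in $\text{supp}(\alpha_{st})$. Specializing to $(r-k)$-planes with $r=(n+2)/2$: requiring $p+2=r-k$ forces $p=(n-2(k+1))/2$, and hence the number of singleton elements is $q=n-2-2p=2k$. These singletons are pairwise non-partners (each lies in a pair the support meets in exactly one point), so listing them in increasing order yields a sequence $(a_1,\ldots,a_{2k})\in I(2k,2n)$ with $a_i+a_j\ne 2n+1$; moreover the $p$ complete pairs are disjoint from all $a_j$, i.e.\ they form a subset of $\Sigma_{a_1,\ldots,a_{2k}}$ of cardinality $(n-2(k+1))/2$.

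This yields the bijection at once. I would define the forward map sending an $(r-k)$-plane $\Pi_{\alpha_{st}}$, whose singleton set is $\{a_1,\ldots,a_{2k}\}$, to the element of $\Sigma\{a_1,\ldots,a_{2k}\}=(a_1,\ldots,a_{2k})\times C_{(n-2(k+1))/2}(\Sigma_{a_1,\ldots,a_{2k}})$ that records the collection of complete pairs occurring in $\text{supp}(\alpha_{st})$. The inverse glues a chosen subset $\{P_{\alpha(1)},\ldots,P_{\alpha(p)}\}\subseteq \Sigma_{a_1,\ldots,a_{2k}}$ with the fixed $a_j$'s into the increasing sequence $\alpha_{st}\in I(n-2,2n)$ whose support is $\{a_1,\ldots,a_{2k}\}\cup P_{\alpha(1)}\cup\cdots\cup P_{\alpha(p)}$.

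The one point requiring care — the main obstacle — is well-definedness of the inverse: I must check that gluing produces a set of exactly $n-2$ elements containing exactly $p$ complete pairs, so that $\Pi_{\alpha_{st}}$ is genuinely an $(r-k)$-plane and not one with a larger plane number. The cardinality is $2k+2p=n-2$, as needed. For the pair count, the chosen pairs contribute $p$ complete pairs, and no further complete pair can appear: the $a_j$ are disjoint from every chosen pair (by the definition of $\Sigma_{a_1,\ldots,a_{2k}}$) and are pairwise non-partners (since $a_i+a_j\ne 2n+1$), so none of them completes a pair. This is exactly where the defining hypothesis $a_i+a_j\ne 2n+1$ built into $\Sigma\{a_1,\ldots,a_{2k}\}$ is used, and it also shows that if the $a_j$ violated this condition both sides would be empty. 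Since the decomposition of $\text{supp}(\alpha_{st})$ into complete pairs and singletons is unique, the two maps are mutually inverse, which completes the proof.
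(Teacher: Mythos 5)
Your proof is correct and takes essentially the same approach as the paper: both rest on decomposing $\text{supp}(\alpha_{st})$ into complete symplectic pairs and pairwise non-partner singletons, with the surviving variables of $\Pi_{\alpha_{st}}$ indexed by the pairs disjoint from the support, so that an $(r-k)$-plane corresponds exactly to $2k$ singletons $(a_1,\ldots,a_{2k})$ together with $(n-2(k+1))/2$ pairs from $\Sigma_{a_1,\ldots,a_{2k}}$. The only difference is directional and expository: the paper fills the Pl\"ucker linear relation to map $\Sigma\{a_1,\ldots,a_{2k}\}$ onto planes and dismisses injectivity as ``direct,'' while you construct the map from planes to $\Sigma\{a_1,\ldots,a_{2k}\}$ via the $p+q+d=n$, $2p+q=n-2$ count and explicitly verify well-definedness of the gluing inverse, which makes your write-up somewhat more complete.
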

\begin{proof}
From the Pl\"ucker linear relations
 $$X_{1\aspace[1.0pt] 2n}+X_{2\aspace[1.0pt] (2n-1)}+\cdots+X_{n\aspace[1.0pt] (n+1)}=0$$
filling each of the $\aspace[1.5pt]$ with sequences
 $$a_1,a_2,\ldots,a_{2k},P_{\alpha(1)},\ldots, P_{\alpha((n-2(k+1))/2)}\in \Sigma\{a_1,\ldots,a_{2k}\}$$
 eliminates $2k+(n-2(k+1))/2=r+k-2$ variables, which gives an $(r-k)$-plane  of the $\Pi_{\alpha_{st}}$-planes, for $\alpha_{st}\in I(n-2,2n)$.
 This is a surjective function, since for any $(r-k)$-plane  of the form $\Pi_{\alpha_{st}}=\sum_{i=1}^nc_{i,\alpha_{st},2n-i+1}X_{i,\alpha_{st},2n-i+1}$,
 with $\alpha_{st}\in I(n-2,2n)$,
where
$$c_{i,\alpha_{st},2n-i+1}=\begin{cases}
1  & \text{if $|\text{supp}\{i,\alpha_{st},2n-i+1\}|=n$}, \\
0 & \text{otherwise},
\end{cases}$$
we have two cases, either $k=0$, in this case $\alpha_{st}=(P_{\alpha(1)},\ldots, P_{\alpha((n-2)/2))})\in \Sigma\{\emptyset\}$ or $k>0$, in this case $\alpha_{st}=(a_1,\ldots,a_{2k},P_{\alpha(1)},\ldots, P_{\alpha((n-2(k+1))/2))})$,
with $a_i+a_j\neq 2n+1$ and $a_i+\alpha(j)\neq 2n+1$, for all $i\neq j$.
In the second case, $\alpha_{st}\in \Sigma\{a_1,\ldots,a_{2k}\}$. Injectivity is direct.
\end{proof}
From Lemma \ref{lem2.3} it follows that, for $0\leq \ell\leq r-2$,
\begin{itemize}
\item For $\ell=0$, the number of $r$-planes of ${\mathbb P}(\ker
f)$ of the form $\Pi_{\alpha_{st}}$, for $\alpha_{st}\in
I(n-2,2n)$,  is $C^n_{\frac{n-2}{2}}$. \item For $\ell=1$, the
number of $(r-1)$-planes of ${\mathbb P}(\ker f)$ of the form
$\Pi_{\alpha_{st}}$, for $\alpha_{st}\in I(n-2,2n)$,  is
$Q_{a_1a_2}C^{n-2}_{\frac{n-4}{2}}$. \item For $\ell=2$, the
number of $(r-2)$-planes of ${\mathbb P}(\ker f)$ of the form
$\Pi_{\alpha_{st}}$, for $\alpha_{st}\in I(n-2,2n)$,  is
$Q_{a_1a_2a_3a_4}C^{n-4}_{\frac{n-6}{2}}$.
\item[$\vdots$]
\item For $\ell=r-3$, the number of $3$-planes of ${\mathbb
P}(\ker f)$ of the form $\Pi_{\alpha_{st}}$, for $\alpha_{st}\in
I(n-2,2n)$,  is $Q_{a_1a_2\ldots a_{n-4}}C^4_1$. \item For
$\ell=r-2$, the number of $2$-planes of ${\mathbb P}(\ker f)$ of
the form $\Pi_{\alpha_{st}}$, for $\alpha_{st}\in I(n-2,2n)$,  is
$Q_{a_1a_2\ldots a_{n-2}}C^2_0$. \item There are no $1$-planes of
the form $\Pi_{\alpha_{st}}$.
\end{itemize}
It follows that:
\begin{cor}\label{cor2.4}
 The total number of planes of ${\mathbb P}(\ker f)$ of the form
 $\Pi_{\alpha_{st}}$, for $\alpha_{st}\in I(n-2,2n)$,  is
$$C^{2n}_{n-2}=C^n_{\frac{n-2}{2}}+Q_{a_1a_2}C^{n-2}_{\frac{n-4}{2}}+
Q_{a_1a_2a_3a_4}C^{n-4}_{\frac{n-6}{2}}+\cdots+Q_{a_1a_2\ldots a_{n-4}}C^4_1+Q_{a_1a_2\ldots a_{n-2}}C^2_0.$$
\end{cor}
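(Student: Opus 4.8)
The plan is to read the displayed identity as the assertion that summing the cardinalities of the classes of a partition of $I(n-2,2n)$ recovers $|I(n-2,2n)|=C^{2n}_{n-2}$. First I would observe that the assignment $\alpha_{st}\mapsto\Pi_{\alpha_{st}}$ is a bijection from $I(n-2,2n)$ onto the set of planes of ${\mathbb P}(\ker f)$ of the stated form. It is well defined and surjective by construction, and it is injective because the intersection of the index-supports of the Pl\"ucker variables occurring in $\Pi_{\alpha_{st}}$ is exactly $\text{supp}(\alpha_{st})$: each such variable has index-support $\text{supp}(\alpha_{st})\cup\{i,2n-i+1\}$, the pairs $\{i,2n-i+1\}$ are disjoint for distinct $i$, and every such plane carries at least two terms, so $\alpha_{st}$ can be recovered. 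Hence the left-hand side $C^{2n}_{n-2}$ is simply the number of these planes.

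Next I would classify each plane by its dimension. Given $\alpha_{st}$, let $p$ be the number of diagonal pairs $\{i,2n-i+1\}$ entirely contained in $\text{supp}(\alpha_{st})$ and $q$ the number meeting $\text{supp}(\alpha_{st})$ in exactly one point, so that $2p+q=n-2$. A short count shows that the number of nonzero terms in $\Pi_{\alpha_{st}}$, that is, the number of diagonal pairs disjoint from $\text{supp}(\alpha_{st})$, equals $p+2$; writing $k=q/2$ this is precisely $r-k$ with $r=(n+2)/2$. Thus the planes split, according to $k\in\{0,1,\ldots,r-2\}$, into the $(r-k)$-planes, and this is exactly the decomposition underlying Lemma~\ref{lem2.3}: the $q=2k$ singleton indices are the entries $a_1,\ldots,a_{2k}$ (subject to $a_i+a_j\neq 2n+1$) and the $p=(n-2k)/2-1$ full pairs are the $P_{\alpha(j)}$.

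Then I would count each class. By Lemma~\ref{lem2.3} the $(r-k)$-planes are in bijection with $\Sigma\{a_1,\ldots,a_{2k}\}$, and by Remark~\ref{rem2.1}(1)--(2) this set has cardinality $Q_{a_1\ldots a_{2k}}\,C^{n-2k}_{(n-2(k+1))/2}$: the factor $Q_{a_1\ldots a_{2k}}$ counts the admissible singleton-tuples, while the binomial counts the choices of full pairs among the remaining $n-2k$ diagonal positions, using $|\Sigma_{a_1,\ldots,a_{2k}}|=n-2k$. Summing these cardinalities over $k=0,\ldots,r-2$ reproduces the itemized list immediately preceding the corollary and yields exactly the right-hand side. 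Since the classes are pairwise disjoint and exhaust $I(n-2,2n)$ -- every $\alpha_{st}$ determines a unique $p$, hence a unique $k$ -- the grand total equals $C^{2n}_{n-2}$ by additivity of cardinality over a finite partition.

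The one point demanding care, and the only place I expect real friction, is confirming that this classification is genuinely a partition: that the decomposition of each $\alpha_{st}$ into singleton indices and full diagonal pairs is unique and that the parameter $k$ ranges over all of $\{0,\ldots,r-2\}$ (equivalently $p$ over $\{0,\ldots,(n-2)/2\}$), so that no plane is counted twice and none is omitted. Once this bookkeeping is pinned down, the identity follows immediately.
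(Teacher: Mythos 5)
Your proposal is correct and takes essentially the same route as the paper: the corollary is obtained by partitioning $I(n-2,2n)$ according to the number $2k$ of ``singleton'' diagonal indices, invoking Lemma~\ref{lem2.3} together with the cardinalities $Q_{a_1\ldots a_{2k}}C^{n-2k}_{(n-2(k+1))/2}$ from Remark~\ref{rem2.1}, and summing over $k=0,\ldots,r-2$ exactly as in the itemized list preceding the statement. Your added verifications (injectivity of $\alpha_{st}\mapsto\Pi_{\alpha_{st}}$ via the common support of the occurring variables, and uniqueness of the singleton/full-pair decomposition giving $r-k$ terms) merely make explicit what the paper's proof, which consists of the itemized count followed by ``It follows that,'' leaves implicit.
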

\begin{example} The total number of planes of ${\mathbb P}(\ker f) \subseteq \Lambda^6 E$
of the form $\Pi_{\alpha_{st}}$, for $\alpha_{st}\in I(4,12)$ is
$$C^{12}_{4}=C^6_{2}+ Q_{a_1a_2}C^{4}_{1}+
Q_{a_1a_2a_3a_4}C^{2}_{2},$$ where it is easy to see that
$$C^{12}_{4}=C^6_{2}+ 60 C^{4}_{1}+
240 C^{2}_{2},$$ since $Q_{a_1 a_2}= C_{2}^{12} - |\{P_i\}_{i=1}^6|
= 60$, and $Q_{a_1a_2a_3} = |\{(a_1,a_2,a_3) \in I(3,12): a_i + a_j
\neq 13, \text{for all} \; i, j)\}|= 240$.
\end{example}

\section{An explicit description of ${\mathbb P}(\ker f)$}\label{SecPrin}

In this section we prove the main result of this paper, that is, an
explicit description of the linear polynomials that cut out the
Lagrangian-Grassmannian $L(n,2n)$ in the Grassmann variety
$G(n,2n)$. We begin with
\begin{equation}\label{lem2.5}
I(n-2,2n)=C_{(n-2)/2}(\Sigma_n)\cup\Big(\bigcup_{\ell=1}^{r-2}
\bigcup_{(a_1,\ldots, a_{2\ell})\in I(2\ell,2n)\atop a_i+a_j\neq
2n+1}\Sigma\{a_1,\ldots, a_{2\ell}\}\Big).
\end{equation}

\begin{thm}\label{thmPeven}
For $n\geq 4$ even,  $r=(n+2)/2$ and $1\leq k\leq r-2$. As in Section \ref{sec:sec2}, let $B$ be the matrix associated to the kernel of the contraction map, that is, the linear sections
that define the Lagrangian-Grassmannian $L(n,2n)$.
Then, $B$ is the direct sum
$$B={\EuScript L}_r\oplus\Big( \bigoplus_{k=1}^{r-2}\Big(\bigoplus_{1\leq a_1<\cdots<a_{2k}\leq 2n\atop a_i+a_j\neq 2n+1}{\EuScript L}_{k+1}^{(a_1,\cdots, a_{2k})}\Big)\Big),$$
where ${\EuScript L}_{k+1}^{(a_1,\cdots, a_{2k})} \cong {\EuScript
L}_{k+1}$ (are equivalent) for $1 \leq k \leq r-2$.
\end{thm}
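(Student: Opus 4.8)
The plan is to show that the linear system defining $\mathbb{P}(\ker f)$ decomposes as a block-diagonal matrix where each block corresponds to a fixed choice of ``forced'' indices $(a_1,\ldots,a_{2k})$ with $a_i+a_j\neq 2n+1$, and that each such block is precisely a copy of ${\EuScript L}_{k+1}$. The combinatorial backbone is equation~\eqref{lem2.5}, which partitions the index set $I(n-2,2n)$ (the set labeling the Pl\"ucker linear relations $\Pi_{\alpha_{st}}$) into the piece $C_{(n-2)/2}(\Sigma_n)$ together with the pieces $\Sigma\{a_1,\ldots,a_{2\ell}\}$ for $1\leq\ell\leq r-2$. First I would establish that this is genuinely a \emph{disjoint} union, so that every equation $\Pi_{\alpha_{st}}$ belongs to exactly one block; this is what makes a direct-sum (rather than merely a sum) decomposition legitimate. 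Disjointness follows from the definition of $\Sigma\{a_1,\ldots,a_{2\ell}\}$ in Remark~\ref{rem2.1}: the ``forced'' multi-index $(a_1,\ldots,a_{2\ell})$ records exactly those entries of $\alpha_{st}$ that are \emph{not} of the symplectic-pair form $P_i=(i,2n-i+1)$, so $\ell$ and the tuple $(a_1,\ldots,a_{2\ell})$ are uniquely recovered from $\alpha_{st}$.

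Next I would argue that the rows and columns of $B$ do not interact across blocks. The rows of $B$ are indexed by the relations $\Pi_{\alpha_{st}}$, $\alpha_{st}\in I(n-2,2n)$, and its columns by the variables $X_\gamma$, $\gamma\in I(n,2n)$. The key observation is that a variable $X_{i,\alpha_{st},2n-i+1}$ appearing in $\Pi_{\alpha_{st}}$ completes $\alpha_{st}$ by a full symplectic pair $(i,2n-i+1)=P_i$; hence the ``forced'' non-pair entries of the column index $\gamma$ coincide exactly with those of the row index $\alpha_{st}$. Consequently a relation carrying forced data $(a_1,\ldots,a_{2k})$ only ever meets variables carrying the \emph{same} forced data, and relations with different forced tuples are supported on disjoint sets of columns. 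This yields the block-diagonal form $B={\EuScript L}_r\oplus\bigoplus_k\bigoplus_{(a_1,\ldots,a_{2k})}{\EuScript L}_{k+1}^{(a_1,\cdots,a_{2k})}$, with the leading summand ${\EuScript L}_r$ coming from the $\ell=0$ block $C_{(n-2)/2}(\Sigma_n)$.

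It then remains to identify each block with ${\EuScript L}_{k+1}$. For the $\ell=0$ block this is exactly Lemma~\ref{2.3}, which gives $M_m={\EuScript L}_k$ with $m=n$ and $k=r=(n+2)/2$; the submatrix of $B$ supported on $C_{(n-2)/2}(\Sigma_n)$ is $\varphi(C_{(n-2)/2}(\Sigma_n))=M_n={\EuScript L}_r$. For a block with forced tuple $(a_1,\ldots,a_{2k})$, I would use part~(1) of Remark~\ref{rem2.1}: deleting the $2k$ forced indices leaves a symplectic subspace of dimension $2(n-2k)$ whose self-paired indices form $\Sigma_{a_1,\ldots,a_{2k}}$ with $|\Sigma_{a_1,\ldots,a_{2k}}|=n-2k$. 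Setting $m'=n-2k$ (still even) and $k'=(m'+2)/2=k+1$, the block is exactly the matrix $M_{m'}$ built on this smaller symplectic space, which by Lemma~\ref{2.3} equals ${\EuScript L}_{k'}={\EuScript L}_{k+1}$. The isomorphism ${\EuScript L}_{k+1}^{(a_1,\cdots,a_{2k})}\cong{\EuScript L}_{k+1}$ is then a relabeling (permutation of rows and columns) induced by the order-preserving identification of $\Sigma_{a_1,\ldots,a_{2k}}$ with $\Sigma_{n-2k}$.

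The main obstacle I anticipate is bookkeeping the index identifications cleanly, particularly verifying that the restricted Pl\"ucker relations on the reduced index set $\Sigma_{a_1,\ldots,a_{2k}}$ really reproduce the function $\varphi$ of Section~\ref{Sec-Mm} for the smaller parameter $m'=n-2k$, rather than some twisted variant. Concretely, one must check that whether a variable $X_{i,\alpha_{st},2n-i+1}$ survives (the support condition $|\mathrm{supp}\{i,\alpha_{st},2n-i+1\}|=n$) is governed purely by the reduced data, i.e.\ that the forced pair $(a_i,a_j)$ with $a_i+a_j\neq 2n+1$ neither creates nor destroys incidences that depend on the ambient $2n$. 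This is where part~(2) of Remark~\ref{rem2.1}, counting $|\Sigma\{a_1,\ldots,a_{2k}\}|$, and the bijection of Lemma~\ref{lem2.3} are doing the real work: they guarantee the block sizes match $C^{m'}_{(m'-2)/2}\times C^{m'}_{m'/2}$, which is the shape of ${\EuScript L}_{k+1}$, so the identification is forced. Once the sizes and the support rule are matched, the equivalence is immediate.
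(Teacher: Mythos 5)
Your proposal follows essentially the same route as the paper's own proof: extend $\varphi$ to all of $I(n-2,2n)$, split the row-index set according to the partition \eqref{lem2.5}, and identify each block with a matrix ${\EuScript L}_j$ via Lemma~\ref{2.3}. In fact you are more careful than the paper on the two points it leaves implicit: (i) that the union in \eqref{lem2.5} is disjoint, because the forced tuple $(a_1,\ldots,a_{2\ell})$ is recoverable from $\alpha_{st}$ as the set of entries whose symplectic partner is absent; and (ii) that distinct blocks occupy disjoint sets of columns, because any surviving variable $X_{i,\alpha_{st},2n-i+1}$ has column index $\{i\}\cup\alpha_{st}\cup\{2n-i+1\}$ whose forced entries coincide exactly with those of the row index $\alpha_{st}$ (the adjoined pair is complete and disjoint from $\alpha_{st}$). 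These observations are precisely what upgrades the set-theoretic partition to a block-diagonal (direct-sum) decomposition of $B$; the paper merely asserts this.

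There is, however, one concrete arithmetic misstep, and it matters for the final identification: with $m'=n-2k$ you write $k'=(m'+2)/2=k+1$, but in fact $(m'+2)/2=(n-2k+2)/2=r-k$, which equals $k+1$ only when $r=2k+1$. Your reduction itself is right --- the block attached to a forced tuple of length $2k$ is $M_{n-2k}={\EuScript L}_{r-k}$, whose rows are $(r-k)$-planes, consistent with Lemma~\ref{lem2.3} --- so what your argument actually proves is the theorem with ${\EuScript L}_{r-k}$ in place of ${\EuScript L}_{k+1}$. The discrepancy is an indexing error in the printed statement (reproduced verbatim in the paper's proof), not in your analysis: the paper's own example for $L(6,12)$ attaches $60$ copies of ${\EuScript L}_3={\EuScript L}_{r-1}$ to the pairs $(a_1,a_2)$ and $240$ copies of ${\EuScript L}_2={\EuScript L}_{r-2}$ to the quadruples, and the row-count identity of Corollary~\ref{cor2.4} balances to $C^{2n}_{n-2}$ only with the subscript $r-k$; with ${\EuScript L}_{k+1}$ taken literally the block sizes do not even sum correctly. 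So you should not force the arithmetic to match the printed subscript; state the identification as ${\EuScript L}_{r-k}$ and note the relabeling. A last small point, shared with the paper: Lemma~\ref{2.3} is stated for $m\geq 8$, so the small blocks ${\EuScript L}_2$ and ${\EuScript L}_3$ (that is, $m'=2,4$) need to be checked directly, as in the paper's examples for $M_4$ and ${\EuScript L}_3$.
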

\begin{proof}
First, we extend the function $\varphi$ defined in
Section~\ref{Sec-Mm} to $\varphi:I(n-2,2n)\rightarrow
F^{C_n^{2n}}$ as
$$(\alpha(1),\ldots,\alpha(n-2))\mapsto \Big( \varphi^{\beta(1),\ldots,\beta(n)}_{i,\alpha(1),\ldots, \alpha(n-2),2n-i+1}\Big)_{\beta\in I(n,2n),\, 1\leq i\leq n},$$
where {\small $$
\varphi^{\beta(1),\ldots,\beta(n)}_{i,\alpha(1),\ldots,
\alpha(n-2),2n-i+1}=\begin{cases}
1  & \text{if $\{i,\alpha(1),\ldots, \alpha(n-2),2n-i+1\}=\{\beta(1),\ldots,\beta(n)\}$}, \\
0& \text{otherwise}.
\end{cases}$$}
The corresponding $C^{2n}_{n-2}\times C^{2n}_n$ matrix is
$$B=\Big(\varphi^{\beta(1),\ldots,\beta(n)}_{i,\alpha(1),\ldots, \alpha(n-2),2n-i+1}\Big)_{\alpha\in I(n-2,2n),\beta\in I(n,2n)}, 1\leq i \leq n.$$
Then, by the equality \eqref{lem2.5}
 {\small\begin{align*}
I(n-2,2n)&=
C_{(n-2)/2}(\Sigma_n)\cup\Big(\bigcup_{k=1}^{r-2}\bigcup_{(a_1,\ldots,
a_{2k})\in I(2k,2n)\atop a_i+a_j\neq 2n+1}\Sigma\{a_1,\ldots,
a_{2k}\}\Big),
\end{align*}}
and using Lemma \ref{2.3}, we obtain that 
{\small
\begin{align*}
\Big(\varphi^{\beta(1),\ldots,\beta(n)}_{i,\alpha(1),\ldots,
\alpha(n-2),
2n-i+1}\Big)_{\small {\alpha\in I(n-2,2n)}\atop \beta\in I(n,2n)}&=\Big(\varphi^{\beta(1),\ldots,\beta(n)}_{i,\alpha(1),\ldots, \alpha(n-2),2n-i+1}\Big)_{\alpha\in C_{\frac{n-2}{2}}(\sum_n)\atop \beta\in I(n,2n)}\\
&\quad \oplus\Big(\varphi^{\beta(1),\ldots,\beta(n)}_{i,\alpha(1),\ldots, \alpha(n-2),2n-i+1}\Big)_{\alpha\in \displaystyle\bigcup_{k=1}^{r-2}\bigcup_{{(a_1,\ldots,a_{2k})\in I(2k,2n)}\atop a_i + a_j \neq 2n+1}  \Sigma\{a_1,\ldots, a_{2k}\}\atop \beta\in I(n,2n)}\\
& = (\varphi(C_{\frac{n-2}{2}}(\Sigma_n))) \oplus \Big( \bigoplus_{k-1}^{r-2} \bigoplus_{(a_1,\ldots,a_{2k})\in I(2k,2n) \atop a_i+a_j\neq 2n+1} (\varphi(\Sigma(a_1,\ldots,\alpha_{2k})))\Big)\\
&={\EuScript L}_r\oplus\Big( \bigoplus_{k=1}^{r-2}\Big( \bigoplus_{1\leq a_1<\cdots<a_{2k}\leq 2n\atop a_i+a_j\neq 2n+1}{\EuScript L}_{k+1}^{(a_1,\cdots, a_{2k})}\Big)\Big).\\
\end{align*}}
\end{proof}

\subsection{When $n$ is odd}
The odd case is obtained by modifications to the even case. Basically, we just modify  Remark~\ref{rem2.1}, that is

\begin{rem}\label{rem2.odd}
For $n\geq 5$ odd and $r=(n+1)/2$ consider integers $1\leq \ell\leq
r-2$ and sequences of integers
$$1\leq a_1<a_2<\cdots<a_{2\ell +1}\leq 2n\quad\text{such that $a_i+a_j\neq 2n+1$},$$
and define
$$\Sigma_{a_1,\ldots,a_{2\ell +1}}:=\{P_i\in\Sigma_n: i+a_j\neq 2n+1,\, 2n-i+1+a_j\neq 2n+1\}.$$
Then:
\medskip

\noindent{(1)} We have $\big|\Sigma_{a_1,\ldots,a_{2\ell + 1}}\big|=n-(2\ell +1)$.
\medskip

\noindent{(2)}  For $1\leq \ell \leq r-2$, let
\begin{align*}
\Sigma\{a_1,\ldots,a_{2\ell + 1}\}&:=(a_1,\ldots, a_{2\ell + 1})\times C_{(n-(2\ell+3))/2}(\Sigma_{a_1,\ldots ,a_{2\ell +1}}) \\
&=\big\{\big(a_1\ldots a_{2\ell +1}, P_{\alpha(1)},\ldots, P_{\alpha((n-(2\ell+3))/2)}\big)\;\text{such that}\\
&\big(\alpha(1),\ldots, \alpha((n-(2\ell+3))/2)\big) \in I(\textstyle\frac{
n-(2\ell+3)}{2},n-(2\ell +1))\big\}.
\end{align*}

\noindent{(3)}  With the above notation $(\ref{lem2.5})$ becomes:
\begin{equation*}
I(n-2,2n)= \bigcup_{i=1}^n \left[(i)\times C_{(n-3)/2}(\Sigma(i))
\right]\cup \bigcup_{\ell=0}^{r-3}\Big( \bigcup_{1\leq
a_1<\ldots< a_{2\ell +1} \leq 2n \atop a_i+a_j\neq 2n+1}
\Sigma\{a_1, ,\ldots, a_{2\ell+1}\} \Big).
\end{equation*}
\end{rem}

\begin{thm}\label{thmPodd}
For an odd integer $n\geq 5$  and $r=(n+1)/2$. As in Section \ref{sec:sec2}, let $B$ be the matrix associated to the kernel of the contraction map, that is, the linear sections
that define the Lagrangian-Grassmannian $L(n,2n)$.
Then, $B$ is the direct sum
$$B={\EuScript L}_r^{n}\oplus\Big(\bigoplus_{k=0}^{r-3}
\Big(\bigoplus_{1\leq a_1< a_2< \cdots < a_{2k+1}\leq 2n \atop a_i
+ a_j \neq 2n+1} {\EuScript
L}_{k+2}^{(a_1,a_2,\dots,a_{2k+1})}\Big)\Big),$$ where ${\EuScript
L}_{k+2}^{(a_1, a_2,\dots, a_{2k+1})}\cong {\EuScript L}_{k+2}$
for $0\leq k\leq r-3$.
\end{thm}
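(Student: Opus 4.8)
The plan is to mirror the even-case proof of Theorem~\ref{thmPeven} almost verbatim, substituting the decomposition of $I(n-2,2n)$ given in Remark~\ref{rem2.odd}(3) for the even decomposition \eqref{lem2.5}. First I would extend the function $\varphi$ to all of $I(n-2,2n)$ exactly as in the even case, sending $(\alpha(1),\ldots,\alpha(n-2))$ to the indicator row vector $\big(\varphi^{\beta(1),\ldots,\beta(n)}_{i,\alpha(1),\ldots,\alpha(n-2),2n-i+1}\big)_{\beta\in I(n,2n),\,1\le i\le n}$, and assemble the $C^{2n}_{n-2}\times C^{2n}_n$ matrix $B$ whose rows are these vectors. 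The key point is that the rows of $B$ are indexed by $I(n-2,2n)$, so any partition of the index set $I(n-2,2n)$ into disjoint pieces yields a corresponding block (direct sum) decomposition of the row space of $B$.

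Next I would feed in the odd decomposition
$$I(n-2,2n)=\bigcup_{i=1}^n\big[(i)\times C_{(n-3)/2}(\Sigma(i))\big]\cup\bigcup_{\ell=0}^{r-3}\Big(\bigcup_{1\le a_1<\cdots<a_{2\ell+1}\le 2n\atop a_i+a_j\neq 2n+1}\Sigma\{a_1,\ldots,a_{2\ell+1}\}\Big),$$
and observe that this is a disjoint union, so $B$ splits as the direct sum of the submatrices indexed by each piece. The first group of pieces, $\bigcup_{i=1}^n[(i)\times C_{(n-3)/2}(\Sigma(i))]$, plays the role that $C_{(n-2)/2}(\Sigma_n)$ played in the even case and should assemble into the single block ${\EuScript L}_r^{n}$; the remaining pieces, indexed by odd-length sequences $(a_1,\ldots,a_{2k+1})$, each contribute a copy of ${\EuScript L}_{k+2}$. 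Thus I expect to conclude
$$B={\EuScript L}_r^{n}\oplus\Big(\bigoplus_{k=0}^{r-3}\Big(\bigoplus_{1\le a_1<\cdots<a_{2k+1}\le 2n\atop a_i+a_j\neq 2n+1}{\EuScript L}_{k+2}^{(a_1,\ldots,a_{2k+1})}\Big)\Big).$$

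The identification of each block with (a matrix equivalent to) the appropriate ${\EuScript L}_{k+2}$ is the analogue of Lemma~\ref{2.3}, and this is where the real work lies. In the even case Lemma~\ref{2.3} established $\varphi(C_{(m-2)/2}(\Sigma_m))={\EuScript L}_k$ via the triangular-array machinery and the iterative matrix construction $A_k^{\ell}$; for the odd case one needs the corresponding statement that $\varphi$ applied to $(a_1,\ldots,a_{2k+1})\times C_{(n-(2k+3))/2}(\Sigma_{a_1,\ldots,a_{2k+1}})$ is equivalent to ${\EuScript L}_{k+2}$. Since Remark~\ref{rem2.odd}(1) gives $|\Sigma_{a_1,\ldots,a_{2k+1}}|=n-(2k+1)$, the relevant symplectic pairs $\Sigma_{a_1,\ldots,a_{2k+1}}$ again form a $\Sigma$-system of even cardinality on which the even-case lemma applies after relabeling; the extra fixed coordinates $a_1,\ldots,a_{2k+1}$ permute the columns but do not change the combinatorial incidence pattern, which is exactly what "$\cong$ (are equivalent)" records.

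The main obstacle I anticipate is the bookkeeping for the first block ${\EuScript L}_r^{n}$: the term $\bigcup_{i=1}^n[(i)\times C_{(n-3)/2}(\Sigma(i))]$ does not have the clean single-triangle form of the even $C_{(n-2)/2}(\Sigma_n)$, so I would need to verify that, after the appropriate column permutation, these $n$ pieces glue into one matrix of the shape ${\EuScript L}_r$ (with the superscript $n$ signalling the odd variant) rather than splitting into $n$ separate blocks. Establishing that gluing — the odd analogue of Lemma~\ref{lem2.1} and the triangular-array union $T_m=C_{(m-2)/2}(\Sigma_m)$ — is the step most likely to require careful, case-specific verification of the index ranges and the support condition $i+a_j\neq 2n+1$.
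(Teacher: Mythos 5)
Your strategy coincides with the paper's: extend $\varphi$ to $I(n-2,2n)$, substitute the odd decomposition of Remark~\ref{rem2.odd}(3) for \eqref{lem2.5}, observe that a partition of the row-index set yields a block (direct sum) decomposition of $B$, and identify each block $\varphi(\Sigma\{a_1,\ldots,a_{2k+1}\})$ with a copy of an ${\EuScript L}$-matrix by applying the even-case machinery (Lemmas~\ref{lem2.2} and~\ref{2.3}) to the even-cardinality pair system $\Sigma_{a_1,\ldots,a_{2k+1}}$, the fixed entries $a_1,\ldots,a_{2k+1}$ acting only as a relabeling of columns. All of that is exactly what the paper does.

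The genuine flaw is the step you single out as the ``main obstacle.'' The $n$ pieces $(i)\times C_{(n-3)/2}(\Sigma(i))$ do \emph{not} glue into a single matrix of shape ${\EuScript L}_r$, and the verification you plan would fail: taking $m=n-1$, so that $k=(m+2)/2=r$, the matrix ${\EuScript L}_r$ has $C^{n-1}_{(n-3)/2}$ rows, whereas the union of the $n$ pieces has $n\,C^{n-1}_{(n-3)/2}$ rows; moreover the pieces occupy pairwise disjoint sets of columns, because every column $\beta$ met by the piece with index $i$ has the form $\beta=\{i\}\cup P_{j_1}\cup\cdots\cup P_{j_{(n-3)/2}}\cup P_l$, whose unique unpaired entry is $i$, so distinct $i$ give disjoint column supports. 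Hence the first term genuinely splits into $n$ separate diagonal blocks --- precisely the possibility you set out to exclude. The paper's proof takes the route this forces: for each fixed $i$, the set $\Sigma(i)$ is a system of $n-1$ (an even number of) pairs, so Lemmas~\ref{lem2.2} and~\ref{2.3} applied with $m=n-1$ give $\varphi\big((i)\times C_{(n-3)/2}(\Sigma(i))\big)={\EuScript L}_r^{i}\cong{\EuScript L}_r$ for every $i=1,\ldots,n$, and the symbol ${\EuScript L}_r^{n}$ in the statement abbreviates $\bigoplus_{i=1}^{n}{\EuScript L}_r^{i}$, that is, $n$ diagonal copies of ${\EuScript L}_r$ rather than a single ``odd variant'' block; the paper's examples confirm this reading ($5$ submatrices ${\EuScript L}_3$ for $L(5,10)$, $7$ submatrices ${\EuScript L}_4$ for $L(7,14)$). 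So your argument is repaired by deleting the gluing step, not by carrying it out: treat the first group exactly as you already treat the groups $\Sigma\{a_1,\ldots,a_{2k+1}\}$, with one even-case application of Lemma~\ref{2.3} per unpaired index $i$.
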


\begin{proof}
As before, let $\Sigma_n=\{P_1,\dots,P_n\}$ and
$\Sigma(i)=\Sigma_s-\{P_i\}$ for all $i \in \{1,\ldots, n\}$ and
$1\leq s \leq n$. From Lemma~\ref{lem2.2} and \ref{2.3} it is easy
to see that the image of $(i)\times
C_{\frac{n-3}{2}}\big(\Sigma(i)\big)$ under $\varphi$ is ${\EuScript
L}_r$, i.e., $$\varphi \Big((i)\times
C_{\frac{n-3}{2}}\big(\Sigma(i)\big)\Big)={\EuScript L}_r^{i}\cong
{\EuScript L}_r \mathrm{\;for\; all\;} i=1,\dots, n.$$ Now, from
Remark~\ref{rem2.odd} and Theorem~\ref{thmPeven}, we obtain {\small
\begin{gather*}\label{eqfin} I(n-2,2n)=
\bigcup_{i=1}^{n}\left[ (i)\times
C_{\frac{n-3}{2}}\big(\Sigma(i)\big)\right] \cup
\bigcup_{k=0}^{r-3}\Big(\bigcup_{1\leq a_1 < a_2 < \dots < a_{2k+1}
\leq 2n \atop a_i + a_j \neq
2n+1}\Sigma\{a_1,a_2,\dots,a_{2k+1}\}\Big)\end{gather*}}
where
$\Sigma\{a_1,a_2,\dots,a_{2k+1}\}=(a_1,a_2,\dots,a_{2k+1})\times
C_{\frac{n-(2k+3)}{2}}\big(\Sigma_{a_1,a_2,\dots,a_{2k+1}}\big)$.
Then {\small \begin{align*} B=\varphi(I(n-2,2n)) &=
\bigoplus_{i=1}^n \varphi \Big((i)\times C_{\frac{n+1}{2}}(\Sigma
(i))\Big) \oplus \bigoplus_{k=0}^{r-3}\Big( \bigoplus_{1\leq a_1< a_2<
\ldots < a_{2k+1}\leq 2n \atop a_i + a_j \neq 2n +1}
\varphi(\Sigma( a_1, a_2,\ldots, a_{2k+1}))\Big) \\
&=\bigoplus_{i=1}^n {\EuScript L}_r^i \oplus \Big(\bigoplus_{k=0}^{r-3} \Big(\bigoplus_{1\leq a_1 < \ldots < a_{2k+1} \leq a_{2n} \atop a_i + a_j \neq 2n+1}  {\EuScript L}_{k+2}^{(a_1, a_2,\ldots, a_{2k+1})}\Big)\Big) \\
 \end{align*}}
\end{proof}

\section{The rank of the matrices $B$ and ${\EuScript L}_k$}\label{sec4}

\begin{prop}
For any field $F$ of characteristic different from $2$ and  $n$
even, Let $m= 2(k-1)$ with $2\leq k\leq \frac{n+2}{2}=r$. Suppose
that $\text{\rm rank}\, B =C^{2n}_{n-2}$. Then $\text{\rm rank}\,
{\EuScript L}_k = C^m_{\frac{m-2}{2}}$.
\end{prop}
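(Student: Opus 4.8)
The plan is to read the conclusion directly off the block-diagonal decomposition of $B$ provided by Theorem~\ref{thmPeven}, together with the elementary behaviour of rank under direct sums. Recall that Theorem~\ref{thmPeven} exhibits
$$B={\EuScript L}_r\oplus\Big( \bigoplus_{k=1}^{r-2}\Big(\bigoplus_{1\leq a_1<\cdots<a_{2k}\leq 2n,\ a_i+a_j\neq 2n+1}{\EuScript L}_{k+1}^{(a_1,\cdots, a_{2k})}\Big)\Big)$$
as a genuine block-diagonal matrix, whose diagonal blocks are ${\EuScript L}_r$ (once) and the matrices ${\EuScript L}_{k+1}^{(a_1,\cdots,a_{2k})}\cong{\EuScript L}_{k+1}$ for $1\leq k\leq r-2$. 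As the block index $j=k+1$ ranges over $\{2,\ldots,r-1\}$ and we also have $j=r$, every ${\EuScript L}_j$ with $2\leq j\leq r$ occurs at least once as a diagonal block; for $j<r$ this uses only the existence of a single admissible tuple $(a_1,\ldots,a_{2(j-1)})$, which holds because one may take all $a_i$ in $\{1,\ldots,n\}$, where no two entries can sum to $2n+1$.

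First I would record the two standard facts for a block-diagonal matrix $M=\bigoplus_iM_i$ over any field $F$: its rank is $\sum_i\operatorname{rank}M_i$, and its number of rows is the sum of the numbers of rows of the $M_i$; moreover the equivalence ``$\cong$'' of Theorem~\ref{thmPeven}, being a composition of row and column permutations, preserves rank. Applying the first fact to $B$ and noting that its rows are indexed by $I(n-2,2n)$, so that $B$ has exactly $C^{2n}_{n-2}$ rows (the count reconciled in Corollary~\ref{cor2.4}), the hypothesis $\operatorname{rank}B=C^{2n}_{n-2}$ says precisely that $B$ has full row rank.

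Next I would deduce that each diagonal block has full row rank. Indeed, for every block $M_i$ the rank $\operatorname{rank}M_i$ is at most the number of rows of $M_i$, while summing these inequalities gives $\operatorname{rank}B=\sum_i\operatorname{rank}M_i$ on the left and the total number of rows $C^{2n}_{n-2}$ on the right. Since the two extremes are equal by hypothesis, every intermediate inequality must be an equality, so each $M_i$ has full row rank. In particular the block ${\EuScript L}_k$, which appears for every $k$ with $2\leq k\leq r$, has full row rank; by Section~\ref{sec:sec2} its number of rows is $C^{2k-2}_{k-2}$, which under the substitution $m=2(k-1)$ is exactly $C^m_{(m-2)/2}$. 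Hence $\operatorname{rank}{\EuScript L}_k=C^m_{(m-2)/2}$, as asserted.

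I do not anticipate a real obstacle: the heart of the argument is the remark that full rank of a direct sum forces full rank of each summand. The only points deserving care are verifying that ${\EuScript L}_k$ genuinely occurs as a summand for each $k$ in the stated range, and confirming the arithmetic identity $C^{2k-2}_{k-2}=C^m_{(m-2)/2}$ with $m=2(k-1)$. Note that the hypothesis $\operatorname{char}F\neq2$ enters only through the standing assumption $\operatorname{rank}B=C^{2n}_{n-2}$ and is not otherwise needed for this reduction, since the direct-sum rank argument is valid over any field.
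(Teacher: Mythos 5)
Your argument is correct, but it is genuinely different from the paper's. The paper proves the proposition by induction on $k$: the base case $k=3$ is imported from Example 4 of \cite{1} (an explicit computation of $\operatorname{rank}{\EuScript L}_3$, which is the only place the hypothesis $\operatorname{char}F\neq 2$ is actually used), and the inductive step feeds the induction hypothesis (full row rank of the blocks ${\EuScript L}_{k+1}$ with $k+1<r$) into the decomposition of Theorem~\ref{thmPeven} to get $\operatorname{rank}B=\operatorname{rank}{\EuScript L}_r+Q_{a_1a_2}C^{n-2}_{\frac{n-4}{2}}+\cdots+Q_{a_1\cdots a_{n-2}}C^2_0$, then subtracts this from the identity of Corollary~\ref{cor2.4} to isolate $\operatorname{rank}{\EuScript L}_r=C^n_{\frac{n-2}{2}}$. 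You instead run a one-shot squeeze: since the row index sets of the blocks partition $I(n-2,2n)$ by \eqref{lem2.5}, the hypothesis $\operatorname{rank}B=C^{2n}_{n-2}$ says the direct sum has full row rank, and then $\sum_i\operatorname{rank}M_i=\operatorname{rank}B=\sum_i(\text{rows of }M_i)$ combined with $\operatorname{rank}M_i\leq(\text{rows of }M_i)$ forces every block to have full row rank simultaneously. This buys three things. First, self-containedness: no induction and no external base case are needed, only Theorem~\ref{thmPeven}, the row count of Corollary~\ref{cor2.4}, and the existence of one admissible tuple per $k$ (your choice $a_i\in\{1,\ldots,n\}$ indeed works, since $2(k-1)\leq n-2$ and no two such entries sum to $2n+1$). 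Second, generality: as you observe, $\operatorname{char}F\neq 2$ is never used, so the implication holds over every field, the characteristic hypothesis being inherited only through the assumption on $\operatorname{rank}B$. Third, uniformity: you obtain the conclusion for all $2\leq k\leq r$ in one pass, whereas the paper's inductive step directly produces only the top case $k=r$ and leans on the induction hypothesis for the intermediate values of $k$ --- a delicate point in the paper's own proof, since the standing hypothesis $\operatorname{rank}B=C^{2n}_{n-2}$ refers to the single fixed $n$, so it is not clear how the intermediate cases are grounded there; your argument closes that gap as a byproduct.
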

\begin{proof}
By induction on $k\geq 3$, the case $k=3$ is Example 4
in~\cite{1}, if char $F \neq 2$. Suppose that $\text{rank}\,{\EuScript L}_k =
C_{\frac{m-2}{2}}^m$ for any $3 < k < \frac{n+2}{2}=r$ and $m=
2(k-1)$. By Theorem~\ref{thmPeven}
\begin{align}\label{exAu}
\text{\rm rank}\, B&=\text{\rm rank}\, {\EuScript L}_r + \Big(
\sum_{k=1}^{r-2} \Big( \sum_{1\leq a_1 < \cdots < a_{2k}\leq 2n
\atop a_i + a_j \neq 2n+1} \text{\rm rank}\, {\EuScript
L}_{k+1}^{(a_1,\ldots,a_{2k})} \Big)\Big) \nonumber \\ \text{\rm
rank}\, B&=\text{\rm rank}\, {\EuScript L}_r +
Q_{a_1a_2}C_{\frac{n-4}{2}}^{n-2} + Q_{a_1a_2a_3a_4}
C_{\frac{n-6}{2}}^{n-4} + \cdots + Q_{a_1a_2\cdots a_{n-2}} C_0^2
\end{align}
where the last argument is a consequence of the induction
hypothesis. From the Corollary \ref{cor2.4}, we obtain that {\small
\begin{gather}\label{exC} {\small C_{n-2}^{2n} = C_{\frac{n-2}{2}}^n +
Q_{a_1a_2}C_{\frac{n-4}{2}}^{n-2} + Q_{a_1a_2a_3a_4}
C_{\frac{n-6}{2}}^{n-4} + \cdots  + Q_{a_1a_2\ldots a_{n-2}}
C_0^2}.
\end{gather}}
Using $(\ref{exAu})$ and $(\ref{exC})$ we obtain $\text{\rm
rank}\, {\EuScript L}_r = C_{\frac{n-2}{2}}^n$.
\end{proof}

The following examples  clarify some results of Section \ref{SecPrin}.

\begin{example}
For the Lagrangian-Grassmannian $L(6,12)=G(6,12)\cap{\mathbb P}(\ker
f)$, let $P_i=(i,12-i+1)$ and $\Sigma_6=\{P_1,P_2,\ldots,P_6\}$, we
have $$ I(2,4)= C_2(\Sigma_6) \cup \big( \bigcup_{1\leq a_1< a_2
\leq 12 \atop a_1 +a_2 \neq 13}\Sigma \{a_1a_2\}\big) \cup
\big(\bigcup_{1 \leq a_1 < a_2 < a_3 < a_4 \leq 12 \atop a_i +a_j
\neq 13} \Sigma\{a_1a_2a_3a_4\} \big).$$ 
Moreover, the number of
$4$-planes is $15$, the number of $3$-planes is $240$, and the
number of $2$-planes is $240$. Then, using Theorem
\ref{thmPeven} we have
\begin{align*}
B  &={\EuScript L}_4 \oplus \Big( \bigoplus_{1\leq a_1 < a_2 \leq
12 \atop a_1 + a_2 \neq 13 } {\EuScript L}_3^{(a_1, a_2)} \Big)
\oplus \Big(\bigoplus_{1\leq a_1< a_2 <a_3 < a_4 \leq 12 \atop a_i
+ a_j \neq 13 }
{\EuScript L}_{2}^{(a_1, a_2, a_3, a_4)} \Big)\\
&=
\begin{pmatrix}
\begin{tabular}{|c c c c c c c}
\cline{1-1}\multicolumn{1}{|c|}{${\EuScript L}_4$}  & & & & \multicolumn{1}{c}{ } &    \\
\cline{2-1}\cline{1-1} \multicolumn{1}{c}{}  &
\multicolumn{1}{|c|}{${\EuScript L}_3$} & & &
\multicolumn{1}{c}{$0$}& \\ 
\cline{2-2}\multicolumn{1}{c}{} & \multicolumn{1}{c}{} &
\multicolumn{1}{c}{$ \ddots$} & & \multicolumn{1}{c}{ }  & \\ 
\cline{4-3}\multicolumn{1}{c}{ } &\multicolumn{1}{c}{$0$ } &
\multicolumn{1}{c|}{ }&\multicolumn{1}{c|}{${\EuScript L}_3$}  & & \\
\cline{5-3}\cline{4-4} \multicolumn{1}{c}{} & \multicolumn{1}{c}{}
& \multicolumn{1}{c}{}& \multicolumn{1}{c|}{} &
\multicolumn{1}{c|}{${\EuScript L}_2$}   & \\
\cline{5-5} \multicolumn{1}{c}{} & \multicolumn{1}{c}{} &
\multicolumn{1}{c}{}& \multicolumn{1}{c}{} & \multicolumn{1}{c}{}
& \multicolumn{1}{c}{$\ddots$}
\\ \cline{7-7}
\multicolumn{1}{c}{} & \multicolumn{1}{c}{} &
\multicolumn{1}{c}{}& \multicolumn{1}{c}{ } &
\multicolumn{1}{c}{ } & \multicolumn{1}{c|}{ } &   \multicolumn{1}{c|}{${\EuScript L}_2$} \\
\cline{7-7}
\end{tabular} \end{pmatrix},
\end{align*}
where there are 60 submatrices ${\EuScript L}_3$, and 240
submatrices ${\EuScript L}_2$.

On the other hand, we obtain
\begin{align*}
\text{\rm rank}\, B & =\text{\rm rank}\, {\EuScript L}_4 +
(C_2^{12} - 6)\, \text{\rm rank}\, {\EuScript L}_{3} + 240 \,\text{\rm
rank}\, {\EuScript L}_2
\\ & = \text{\rm rank}\, {\EuScript L}_4 + 60\,\text{\rm rank}\,
{\EuScript L}_3 + 240\, \text{\rm rank}\, {\EuScript L}_2.
\end{align*}
Using a direct calculation, it is easy to see that
\begin{center}
\begin{tabular}{|c|c|c|}
\cline{1-3}$\text{Char}(F)$ & $\text{rank} \,{\EuScript L}_4$ & $\text{rank} \,B$ \\
\cline{1-3} 0 & 15 & 495 \\
\cline{1-3} 2 & 10  & 430 \\
\cline{1-3}3  & 14 & 494 \\
\cline{1-3} $ p \geq 5$   & 15 & 495  \\
\cline{1-3}
\end{tabular}
\end{center}
\end{example}

\begin{example}
For the Lagrangian-Grassmannian $L(5,10)=G(5,10)\cap{\mathbb P}(\ker
f)$. Using the notation of  Theorem \ref{thmPodd},  let
$P_i=(i,12-i+1)$, $\Sigma_5 = \{P_1,P_2,P_3,P_4,P_5\}$ for $i\in
\{1,2,\ldots,5 \}$, and $(i)\times C_3(\Sigma(i))= \{iP_j : j\neq i,
j\in \{1,2,\ldots,5\}\}$. Since
$$ I(3,10)= \Big(\bigcup_{i=1}^5 (i)\times C_3(\Sigma(i))\Big) \cup \Big( \bigcup_{1\leq a_1< a_2< a_3 \leq 10 \atop
a_i +a_j \neq 11}\Sigma \{a_1a_2a_3\}\Big).$$ Then, using 
Theorem \ref{thmPodd} we have
\begin{gather*}
B = \varphi(I(3,10))= \bigoplus_{i=1}^5 {\EuScript L}_3^{\{i\}}
\oplus \Big( \bigoplus_{1\leq a_1 < a_2< a_3 \leq 10 \atop a_i +
a_j \neq 11 } {\EuScript L}_2^{(a_1, a_2, a_3)} \Big).
\end{gather*}
Then
\begin{gather*}
B= \begin{pmatrix}
\begin{tabular}{|c c c c c c c}
\cline{2-1} \multicolumn{1}{c}{}  &
\multicolumn{1}{|c|}{${\EuScript L}_3$} & & &
\multicolumn{1}{c}{$0$}& \\
\cline{2-2}\multicolumn{1}{c}{} & \multicolumn{1}{c}{} &
\multicolumn{1}{c}{$ \ddots$} & & \multicolumn{1}{c}{ }  & \\
\cline{4-3}\multicolumn{1}{c}{ } &\multicolumn{1}{c}{$0$ } &
\multicolumn{1}{c|}{ }&\multicolumn{1}{c|}{${\EuScript L}_3$}  & & \\
\cline{5-3}\cline{4-4} \multicolumn{1}{c}{} & \multicolumn{1}{c}{}
& \multicolumn{1}{c}{}& \multicolumn{1}{c|}{} &
\multicolumn{1}{c|}{${\EuScript L}_2$}   & \\
\cline{5-5} \multicolumn{1}{c}{} & \multicolumn{1}{c}{} &
\multicolumn{1}{c}{}& \multicolumn{1}{c}{} & \multicolumn{1}{c}{}
& \multicolumn{1}{c}{$\ddots$}
\\ \cline{7-7}
\multicolumn{1}{c}{} & \multicolumn{1}{c}{} &
\multicolumn{1}{c}{}& \multicolumn{1}{c}{ } &
\multicolumn{1}{c}{ } & \multicolumn{1}{c|}{ } &   \multicolumn{1}{c|}{${\EuScript L}_2$} \\
\cline{7-7}
\end{tabular} \end{pmatrix},
\end{gather*}
where there are 5 submatrices ${\EuScript L}_3$, and 7 submatrices
${\EuScript L}_2$.

Using a direct calculation, it is easy to see that
\begin{center}
\begin{tabular}{|c|c|c|}
\cline{1-3}$\text{Char}(F)$ & $\text{rank}\,{\EuScript L}_3$ & $\text{rank} \,B$ \\
\cline{1-3} 0 & 4 & 27 \\
\cline{1-3} 2 & 3  & 22 \\
\cline{1-3} $ p \geq 3 $   & 4 & 27  \\
\cline{1-3}
\end{tabular}
\end{center}

\end{example}

\begin{example}
For the Lagrangian-Grassmannian $L(7,14)=G(7,14)\cap{\mathbb P}(\ker
f)$. Using the notation of the Theorem \ref{thmPodd}. For $i\in
\{1,2,\ldots,7\}$, let $P_i=(i,14-i+1)$, $\Sigma_7 =
\{P_1,P_2,P_3,P_4,P_5,P_6,P_7\}$, and $$(i)\times C_4(\Sigma(i))=
\{i P_j P_t : i\neq j, i\neq t,\mathrm{\; with\;} i,j,t\in
\{1,2,\ldots,7\}\}.$$ Since
$$ I(5,14)= \bigcup_{i=1}^7 \big((i)\times C_4(\Sigma (i))\big) \cup \big( \bigcup_{1\leq a_1< a_2< a_3 \leq 14 \atop
a_i +a_j \neq 15}\Sigma \{a_1a_2a_3\}\big) \cup \big( \bigcup_{1\leq
a_1< a_2< \ldots < a_5 \leq 14 \atop a_i +a_j \neq 15}\Sigma
\{a_1a_2a_3a_4 a_5\}\big).$$ 
Then, using  Theorem \ref{thmPodd} we have
\begin{align*}
B & = \varphi(I(5,14)))= \bigoplus_{i=1}^7 \varphi((i)\times
C_4(\Sigma(i))) \oplus \bigoplus_{1\leq a_1 < a_2< a_3 \leq 14 \atop
a_i + a_j \neq 15} \varphi(\Sigma\{a_1a_2a_3\}) \\
&\qquad\qquad \qquad \qquad \qquad \qquad \qquad  \oplus
\bigoplus_{1\leq a_1 < a_2 <\ldots < a_5 \leq 14 \atop a_i +
a_j \neq 15 } \varphi(\Sigma\{a_1a_2a_3a_4a_5 \})  \\
& = \bigoplus_{i=1}^7 {\EuScript L}_4^{\{i\}} \oplus \Big(
\bigoplus_{1\leq a_1 < a_2< a_3 \leq 14 \atop a_i + a_j \neq 15 }
{\EuScript L}_3^{(a_1 a_2 a_3)} \Big) \oplus \Big( \bigoplus_{1\leq
a_1< a_2<\ldots <a_4<a_5\leq 14 \atop a_i + a_j \neq 15} {\EuScript
L}_2^{(a_1a_2 a_3a_4 a_5)} \Big),
\end{align*}
where ${\EuScript L}_4^{\{i\}} = {\EuScript L}_4$, ${\EuScript
L}_4^{(a_1a_2a_3)} = {\EuScript L}_3$ and ${\EuScript
L}_4^{(a_1a_2a_3a_4a_5)} = {\EuScript L}_2$.  Then
\begin{gather*}
B= \begin{pmatrix}
\begin{tabular}{|c c c c c c c c c c}
\cline{2-1} \multicolumn{1}{c}{}  &
\multicolumn{1}{|c|}{${\EuScript L}_4$} & & \multicolumn{1}{c}{} &
& & \multicolumn{1}{c}{$0$}& \\ 
\cline{2-2}\multicolumn{1}{c}{} & \multicolumn{1}{c}{} &
\multicolumn{1}{c}{$ \ddots$} & & \multicolumn{1}{c}{ }  & \\
\cline{4-3}\multicolumn{1}{c}{ } &\multicolumn{1}{c}{ } &
\multicolumn{1}{c|}{ }&\multicolumn{1}{c|}{${\EuScript L}_4$}  & & \\
\cline{5-3}\cline{4-4} \multicolumn{1}{c}{} & \multicolumn{1}{c}{}
& \multicolumn{1}{c}{}& \multicolumn{1}{c|}{} &
\multicolumn{1}{c|}{${\EuScript L}_3$}   & \\
\cline{5-5} \multicolumn{1}{c}{} & \multicolumn{1}{c}{$0$} &
\multicolumn{1}{c}{}& \multicolumn{1}{c}{} & \multicolumn{1}{c}{}
& \multicolumn{1}{c}{$\ddots$}
\\ \cline{7-7}
\multicolumn{1}{c}{} & \multicolumn{1}{c}{} &
\multicolumn{1}{c}{}& \multicolumn{1}{c}{ } &
\multicolumn{1}{c}{ } & \multicolumn{1}{c|}{ } &   \multicolumn{1}{c|}{${\EuScript L}_3$} \\
\cline{7-7}\cline{8-8} \multicolumn{1}{c}{} & \multicolumn{1}{c}{}
& \multicolumn{1}{c}{} & \multicolumn{1}{c}{}& \multicolumn{1}{c}{
} &
\multicolumn{1}{c}{ } & \multicolumn{1}{c|}{ } &   \multicolumn{1}{c|}{${\EuScript L}_2$} \\
\cline{8-8}  \multicolumn{1}{c}{} & \multicolumn{1}{c}{} &
\multicolumn{1}{c}{} & \multicolumn{1}{c}{} &
\multicolumn{1}{c}{}& \multicolumn{1}{c}{} & \multicolumn{1}{c}{}
&\multicolumn{1}{c}{} & \multicolumn{1}{c}{$\ddots$} \\
\cline{10-10}  \multicolumn{1}{c}{} & \multicolumn{1}{c}{} &
\multicolumn{1}{c}{} & \multicolumn{1}{c}{} &
\multicolumn{1}{c}{}& \multicolumn{1}{c}{} & \multicolumn{1}{c}{}
&\multicolumn{1}{c}{} & \multicolumn{1}{c|}{} &
\multicolumn{1}{c|}{${\EuScript L}_2$} \\ \cline{10-10}
\end{tabular} \end{pmatrix}
\end{gather*}
where there are 7 submatrices ${\EuScript L}_4$, $301$ submatrices
${\EuScript L}_3$ and $693$ submatrices ${\EuScript L}_2$.

Using a direct calculation, it is easy to see that
\begin{center}
\begin{tabular}{|c|c|c|}
\cline{1-3}$\text{Char}(F)$ & $\text{rank}\,{\EuScript L}_3$ & $\text{rank}\, B$ \\
\cline{1-3} 0 & 15 & 2002 \\
\cline{1-3} 2 & 10  & 1666 \\
\cline{1-3}3  & 14 & 1995 \\
\cline{1-3} $ p \geq 5$   & 15 & 2002  \\
\cline{1-3}
\end{tabular}
\end{center}
\end{example}


\section*{References}


\begin{thebibliography}{99}

\bibitem{N1} Carrillo-Pacheco, J. and F.  Zald\'{\i}var, \lq\lq
On Codes over FFN$(1,q)$-Projective Varieties \rq\rq. 
{\it Advances in Mathematics of Communications}. To appear (2016).

\bibitem{1} Carrillo-Pacheco, J. and F.  Zald\'{\i}var, \lq\lq On Lagrangian-Grassmannian Codes\rq\rq. {\it Designs, Codes and Cryptography} {\bf 60}, 2011, 291-268.

\bibitem{2} Carrillo-Pacheco, J.,  G. Vega, and F.  Zald\'{\i}var. \lq\lq The Weight Distribution of a Family of Lagrangian-Grassmannian Codes\rq\rq.  {\it Lecture Notes on Computer Science},  Vol. {\bf 9084}, Springer Verlag, 2015, pp. 240-246.

\bibitem{5.1} Fulton, W., \emph{Young Tableaux, with Applications to
Representation Theory and Geometry}, Cambridge University Press, Cambridge, 1997.

\bibitem{5.2}   D. Yu Nogin, \lq\lq Codes associated to Grassmannians\rq\rq, in
\emph{Arithmetic, Geometry and Coding Theory}, (Luminy 1993), Walter de
Gruyter, Berlin-New York, (1996),  145-154.

\bibitem{6} Tsfasman, M. A.,  S. G. Vl\v{a}du\c{t} and D. Y. Nogin, {\it Algebraic Geometry Codes: Basic Notions}. American Mathematical Society, Providence, 2007.
\end{thebibliography}
\end{document}